\documentclass[12pt,reqno]{amsart}
\usepackage{amssymb}
\usepackage{times}
\textwidth 15cm
 \oddsidemargin 0.7cm
 \evensidemargin 0.7cm
 \textheight 45\baselineskip

\newtheorem{thm}{Theorem}[section]
\newtheorem{lemma}[thm]{Lemma}
\newtheorem{cor}[thm]{Corollary}
\newtheorem{prop}[thm]{Proposition}

\theoremstyle{definition}

\newtheorem{eg}[thm]{Example}

\theoremstyle{remark}
\newtheorem*{rmk*}{Remark}
\newtheorem{rmk}[thm]{Remark}

%\renewcommand{\thequestion}{}

%% caligraphy

%% othertype faces 

%% miscellaneous operators

\def\dim{\operatorname{dim}\nolimits}

%% categories and subcategories

%% homology, hom and ext 

\def\Hom{\operatorname{Hom}\nolimits}

\def\Ext{\operatorname{Ext}\nolimits}

%% restrictions, inflations, norms and transfers

%\def\inf#1#2{\operatorname{inf}_{#1}^{#2}}\nolimits

%\def\res#1#2{\text{res}_{#1,#2}}doesn't work!!!!
%\def\res#1#2{\operatorname{res_{#1,#2}}}\nolimits
%\def\ress#1#2{\res#1#2^*}
%that doesn't work with others
%ditto

%\def\tr#1#2{\Tr_{#1}^{#2}}

%\def\norm#1#2{\norm_{#1}^{#2}}

%% fields and varieties

%%specific to this paper

%%% Local Variables: 
%%% mode: plain-tex
%%% TeX-master: t
%%% End: 

%%%%%%%%%%%%%%
%
% From Skip
%
%%%%%%%%%%%%%%%
\newcommand{\la}{\lambda}
\newcommand{\ot}{\otimes}
\DeclareMathOperator{\car}{char}
\DeclareMathOperator{\rank}{rank}

\newcommand{\Gt}{{\widetilde{G}}}
\newcommand{\Tt}{\widetilde{T}}

\DeclareMathOperator{\SO}{SO}
\DeclareMathOperator{\Spin}{Spin}
\DeclareMathOperator{\SL}{SL}
\DeclareMathOperator{\Sp}{Sp}
\DeclareMathOperator{\PGL}{PGL}
\DeclareMathOperator{\HSpin}{HSpin}

\DeclareMathOperator{\Lie}{Lie}

\newcommand{\C}{\mathbb{C}}
\newcommand{\ZZ}{\mathbb{Z}}
\newcommand{\QQ}{\mathbb{Q}}
\newcommand{\NN}{\mathbb{N}}

\newcommand{\FF}{\mathbb{F}}

\newcommand{\hvee}{h^\vee}

\newcommand{\hsr}{\alpha_0}     % the highest short root
\newcommand{\hr}{\widetilde{\alpha}} % the highest root

\newcommand{\ft}{\mathfrak{t}}

\newcommand{\fn}{\mathfrak{n}}

\newcommand{\df}{\mathrm{d}f}

\usepackage{color}

\usepackage{hyperref}
\hypersetup{
  colorlinks=true, %set true if you want colored links
   linkcolor=red,  %choose some color if you want links to stand out
}
\usepackage[hyphenbreaks]{breakurl}

\begin{document}

\title{Globally Irreducible Weyl modules}

\author[S. Garibaldi]{Skip Garibaldi}
%\thanks{Research of the first author partially supported by NSF grant
%DMS-         }
\address{Center for Communications Research, San Diego, California 92121}
\email{skip@member.ams.org}

\author[R.M. Guralnick]{Robert M. Guralnick}
\address{Department of Mathematics, University of Southern California,
Los Angeles, CA 90089-2532}
\email{guralnic@usc.edu}

\author[D.K. Nakano]{Daniel K. Nakano}
\address{Department of Mathematics, University of Georgia, Athens, Georgia 30602, USA}
\email{nakano@math.uga.edu}

\dedicatory{Dedicated to Benedict Gross}

\thanks{The second author was partially supported by NSF grants DMS-1265297 and DMS-1302886.  Research of the third author was partially supported by NSF grants DMS-1402271 and DMS-1701768.}

%\date\today
\subjclass[2010]{Primary 20G05, 20C20}
%\subjclass{20C20}

\begin{abstract}
In the representation theory of split reductive algebraic groups, it is well known that every Weyl module with minuscule highest weight is irreducible over every field. 
Also, the adjoint representation of $E_8$ is irreducible over every field.  In this paper, we prove a converse to these statements, as conjectured by Gross: if a Weyl module is irreducible over every field, it must be either one of these, or trivially constructed from one of these.  We also prove a related result on non-degeneracy of the reduced Killing form.
\end{abstract}

\maketitle

\section{Introduction}

Split semisimple linear algebraic groups over arbitrary fields can be viewed as a generalization of semisimple Lie algebras over the complex numbers, or even compact real Lie groups.  As with Lie algebras, such algebraic groups are classified up to isogeny by their root system.  Moreover, the set of  irreducible representations of such a group is in bijection with the cone of dominant weights for the root system and the representation ring --- i.e., $K_0$ of the category of finite-dimensional representations --- is a polynomial ring with generators corresponding to a basis of the cone.

One way in which this analogy breaks down is that, for an algebraic group $G$ over a field $k$ of \emph{prime} characteristic, in addition to the irreducible representation $L(\la)$ corresponding to a dominant weight $\la$, there are three other representations naturally associated with $\lambda$, namely the standard module $H^0(\la)$, the Weyl module $V(\la)$, and the tilting module $T(\la)$.\footnote{The definitions of these three modules make sense also when $\car k = 0$, and in that case all four modules are isomorphic.} The definition of $H^0(\la)$ is particularly simple: view $k$ as a one-dimensional representation of a Borel subgroup $B$ of $G$ where $B$ acts via the character $\la$, then define $H^0(\la):=\text{ind}_{B}^{G}\lambda$ 
to be the induced $G$-module.  The \emph{Weyl module} $V(\la)$ is the dual of $H^0(-w_0\la)$ for $w_0$ the longest element of the Weyl group and has head $L(\la)$. Typical examples of Weyl modules are $\Lie(G)$ for $G$ semisimple simply connected ($V(\la)$ for $\la$ the highest root) and the natural module of $\SO_n$.  See \cite{Jan} for general background on these three families of representations.

It turns out that if any two of the four representations $L(\la)$, $H^0(\la)$, $V(\la)$, $T(\la)$ are isomorphic over a given field $k$, then all four are.  Our focus is on the question: for which $\la$ are all four isomorphic for \emph{every} field $k$? 

This can be interpreted as a question about representations of split reductive group schemes over $\ZZ$.  Recall that isomorphism classes of such groups are in bijection with (reduced) root data as described in \cite[XXIII.5.2]{SGA3:new}.  A root datum for a group $G$ includes a character lattice $X(T)$ of a split maximal torus $T$ and the set $R \subset X(T)$ of roots of $G$ with respect to $T$.  Picking an ordering on $R$ specifies a cone of dominant weights $X(T)_+$ in $X(T)$.  For each $\la \in X(T)_+$, there is a representation $V(\la)$ for $G$, defined over $\ZZ$, that is generated by a highest weight vector with weight $\la$ such that $V(\la) \ot \C$ is the irreducible representation with highest weight $\la$ of the complex reductive group $G \times \C$ and for every field $k$, $V(\la) \ot k$ is the Weyl module of $G \times k$ mentioned above, see \cite[II.8.3]{Jan} or \cite[p.~212]{St}.  Consequently, the question in the preceding paragraph is the same as asking: \emph{For which $G$ and $\la$ is it true that $V(\la) \ot k$ is an irreducible representation of $G \times k$ for every field $k$?}  Because $G$ is split, $V(\la) \ot k$ is irreducible if and only if $V(\la) \ot P$ is irreducible where $P$ is the prime field of $k$\footnote{See \cite[II.2.9]{Jan}.  For a detailed study of how this fails when $G$ is not split, see \cite{Ti:R}.}, it is natural to call such $V(\la)$ \emph{globally irreducible}.

%Consequently, it suffices to consider whether the Weyl module $V(\la)$ --- which for any $k$ is obtained by base change from $\ZZ$ --- equals the irreducible module $L(\la)$.  To emphasize this, we write $V(\la)$ for the $\ZZ$-form and $V(\la) \ot k$ for the base change to $k$.  
%
%Whether $V(\la) \ot k$ is irreducible depends only on the characteristic of the field $k$.  (Consequently, we are asking for which $\la$ the module $V(\la) \ot \FF_p$ is irreducible for all $p$.  Such modules might be called \emph{globally irreducible.})  
There is a well known and elementary sufficient criterion: 
\begin{equation} \label{sufficient}
\text{\emph{If $\la$ is minuscule, then $V(\la) \ot k$ is irreducible for every field $k$.}}
\end{equation}
See \S\ref{defs} for the definition of minuscule.  This provides an important family of examples, because representations occurring in this way include $\Lambda^r(V)$ for $1 \le r < n$ where $V$ is the natural module for $\SL_n$; the natural modules for $\SO_{2n}$, $\Sp_{2n}$, $E_6$ and $E_7$; and the (half) spin representations of $\Spin_n$.

While these representations play an outsized role, it is nevertheless true that in any reasonable sense they are a set of measure zero among the list of irreducible representations.  Therefore, we were surprised when Benedict Gross 
proposed to us that the sufficient condition \eqref{sufficient} is quite close to also being a \emph{necessary} condition, i.e., that there is only one other example.  The purpose of this paper is to prove his claim, which is the following theorem.

%A well-known fact about the irreducibility of Weyl modules is the following theorem. 
%
%\begin{thm} Let $G$ be a simple simply connected algebraic group over a field of 
%characteristic $p>0$. If $\lambda$ is a minuscule dominant weight then $V(\lambda)=L(\lambda)$. 
%\end{thm} 
%
%One can ask the about the converse of the aforementioned theorem. 

\begin{thm}  \label{MT}
Let $G$ be a split, simple algebraic group over $\ZZ$ with split maximal torus $T$ and fix $\la \in X(T)_+$. In the following cases, $V(\la) \ot k$ is irreducible for every field $k$:
	\begin{enumerate}
	\renewcommand{\theenumi}{\alph{enumi}}
	\item  \label{MT.min} $\lambda$ is a \underline{minuscule} dominant weight, or 
	\item \label{MT.E8} $G$ is a group of type $E_8$ and $\lambda$ is the highest root (i.e., $V(\lambda)$ is the adjoint representation for $E_{8}$); 
	\end{enumerate}
Otherwise, there is a prime $p \le 2(\rank G) + 1$ such that $V(\la) \ot k$ is a reducible representation of $G$ for every field $k$ of characteristic $p$.
\end{thm}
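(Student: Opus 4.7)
My plan is to prove the contrapositive of the non-trivial direction: assuming $\lambda$ is neither minuscule nor the highest root for $G$ of type $E_8$, I exhibit a prime $p \leq 2(\rank G)+1$ making $V(\lambda)\otimes \FF_p$ reducible. The sufficient direction (a) holds because the weights of a minuscule $V(\lambda)$ form a single Weyl orbit, each with multiplicity one, so no proper subrepresentation is possible in any characteristic; (b) reduces to the classical fact that $\Lie(E_8)$ remains simple over every field, since $E_8$ has trivial centre and no torsion prime obstructs the adjoint module.

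The principal tool I would use is Jantzen's sum formula
\[
\sum_{i \geq 1}\operatorname{ch} V(\lambda)^{i} \;=\; \sum_{\alpha > 0}\sum_{0 < mp < \langle\lambda+\rho,\alpha^\vee\rangle} \nu_p(mp)\,\chi\bigl(s_{\alpha,mp}\cdot\lambda\bigr),
\]
non-vanishing of whose right-hand side as a virtual character forces $V(\lambda)\otimes \FF_p$ to have a composition factor distinct from $L(\lambda)$. A first step is a \emph{largeness} reduction: whenever $\langle\lambda,\alpha^\vee\rangle$ is sufficiently large for some positive root $\alpha$, one picks $p = 2$ or $3$ and an admissible $(\alpha, m)$ producing a non-cancelling summand. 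This restricts attention to a finite list of small candidate weights per simple type. Any $\lambda$ outside the lowest $p$-alcove is dispatched separately via Steinberg's tensor product theorem: writing $\lambda = \lambda_0 + p\lambda_1$ with $\lambda_0$ restricted, one has $\dim L(\lambda) = \dim L(\lambda_0)\cdot \dim L(\lambda_1)$, typically strictly less than the Weyl dimension of $V(\lambda)$.

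I would then run through the simple types. In type $A_n$ every fundamental weight is minuscule, so the smallest obstruction is the adjoint $\lambda = \omega_1 + \omega_n$, where $V(\lambda)$ is reducible precisely when $p \mid n+1$, giving a prime $p \leq n+1 \leq 2n+1$. For $B_n$, $C_n$, $D_n$, the natural and adjoint modules are already reducible at $p = 2$, and higher weights succumb either inductively or directly to Jantzen. For the exceptional types $E_6$, $E_7$, $F_4$, $G_2$, explicit analysis of each non-minuscule fundamental weight --- combined with the Steinberg reduction for larger weights --- supplies a prime well within the bound $2(\rank G)+1$.

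The main obstacle is type $E_8$: there are no minuscule weights, and beyond the adjoint I must rule out every other dominant weight using only primes $p \leq 17$. The technical heart of the argument is, for each remaining candidate $\lambda$, to exhibit a Jantzen-admissible pair $(\alpha, mp)$ with $0 < mp < \langle\lambda+\rho,\alpha^\vee\rangle$ producing a non-zero summand --- equivalently, to locate a dominant $\mu < \lambda$ linked to $\lambda$ under the affine Weyl group at some small $p$. Once each such case is discharged, the uniform bound $p \leq 2(\rank G) + 1$ is confirmed by inspection of the primes actually used.
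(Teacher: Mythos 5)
Your outline takes a genuinely different route from the paper --- Jantzen's sum formula plus a ``largeness'' reduction and type-by-type case analysis, versus the paper's induction on rank via restriction to Levi subgroups --- but as written it has a concrete gap that sits exactly where the real difficulty of the theorem lies. You assert that in types $B_n$, $C_n$, $D_n$ ``the natural and adjoint modules are already reducible at $p=2$, and higher weights succumb either inductively or directly to Jantzen.'' This skips the case $G$ of type $B_n$ with $\la = \omega_1 + \omega_n$, which survives every reduction you propose: it is not minuscule, $V(\la)\ot k$ is in fact \emph{irreducible} at $p=2$ and at every prime not dividing $2n+1$, and the only primes witnessing reducibility are the divisors of $2n+1$ --- which can be as large as $2n+1$ itself (this is precisely why the bound $2(\rank G)+1$ in the statement is sharp). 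So no ``pick $p=2$ or $3$'' argument can work here, and the sum-formula route is delicate: one must exhibit a non-cancelling term on the right-hand side for $p \mid 2n+1$, which amounts to a nontrivial combinatorial verification about the affine Weyl group orbit of $\omega_1+\omega_n$ that you have not supplied. The paper settles this case by an entirely different method, restricting $L(\omega_1)\ot L(\omega_n)$ to a finite subgroup $2^{1+2n}.A_{2n+1}$ of $\Spin_{2n+1}$ and counting composition factors using permutation modules for the alternating group. A similar issue, in milder form, affects $C_n$ with $\la = \omega_2 = \hsr$ (reducible exactly when $p \mid n$) and $A_n$ adjoint (which you do note): the relevant primes grow with the rank, so your largeness reduction to $p \in \{2,3\}$ cannot be uniform.

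Separately, the inductive skeleton you invoke is never established. The paper's engine is the statement that irreducibility of $V(\la)\ot k$ forces irreducibility of $V_J(\la)\ot k$ for every Levi $L_J$; this requires proof (via Smith's theorem on fixed points of the unipotent radical of $P_J$ and a semicontinuity argument over $\ZZ$) and is what reduces the whole problem to fundamental weights plus the weights $\omega_1+\omega_n$ on branchless diagrams, eliminating in particular almost all of the $E_8$ analysis you flag as the technical heart. Without that lemma (or a worked-out non-cancellation criterion for the sum formula), your plan identifies the right shape of the answer but does not close the argument.
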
 

%One could formulate the condition in terms of $\Ext$.  The condition ``$V(\la) \ot k$ is irreducible'' in the is equivalent to the statement ``$\Ext^1_G(L(\la), U) = 0$ for every representation $U$ of $G$ over $k$''.

The bound $2(\rank G) + 1$ is sharp by Theorem \ref{B.thm} below. The case where $G$ is simple and simply connected (as in Theorem \ref{MT}) is the main case. 
We have stated the theorem with these simplified hypotheses for the sake of clarity.  See \S\ref{defs} for a discussion of the more general version where $G$ is assumed merely to be reductive.

One surprising feature of our proof is the method we use to address a particular Weyl module of type $B$ in \S\ref{B.sec}, which we settle by appealing to modular representation theory of finite groups.

The literature contains some results complementary to Theorem \ref{MT}, although we do not use them in our proof.  For $G$ of type $A$, Jantzen gave in \cite[II.8.21]{Jan} a necessary and sufficient condition for the Weyl module $V(\la)$ to be irreducible over fields of characteristic $p$.  McNinch \cite{McNinch:ss} (extending Jantzen \cite{Jantzen:low}) showed that for simple $G$ and for $\dim V(\la) \le (\car k) \cdot (\rank G)$, $V(\la)$ is irreducible.

We remark that John Thompson asked in \cite{Th} an analogous question where $G$ is finite: for which $\ZZ[G]$-lattices $L$ is $L/pL$ irreducible for every prime $p$?  This was extended by Gross to the notion of globally irreducible
representations, see \cite{Gross:JAMS} and \cite{Tiep}. Our results demonstrate that $F_{4}$ and $G_{2}$ are the only groups that do not admit globally irreducible representations
other than the trivial representation. 

In an appendix, we prove another result that is similar in flavor to Theorem \ref{MT}: we determine the split simple $G$ over $\ZZ$ such that the reduced Killing form on $\Lie(G) \ot k$ is nondegenerate for every field $k$.  This is done by calculating the determinant of the form on $\Lie(G)$, completing the calculation for $G$ simply connected in \cite{SpSt}.

\subsection*{Quasi-minuscule representations}
The representations appearing in \eqref{MT.min} and \eqref{MT.E8} of Theorem \ref{MT} are \emph{quasi-minuscule} (called ``basic'' in \cite{Matsumoto:basic}), meaning that the non-zero weights are a single orbit under the Weyl group.  For $G$ simple, the quasi-minuscule Weyl modules are the $V(\la)$ with $\la$ minuscule or equal to the highest short root $\hsr$.

It is not hard to see that $V(\hsr) \ot k$ is reducible for some $k$ when $G$ is not of type $E_8$.  If $G$ has type $A$, $D$, $E_6$, or $E_7$, then $V(\hsr)$ is the action of $G$ on the Lie algebra of its simply connected cover $\Gt$, and the Lie algebra of the center $Z$ of $\Gt$ is a nonzero invariant submodule when $\car k$ divides the exponent of $Z$.  The case where $G$ has type $B$ or $C$ is discussed in \S\ref{fund.sec}.  If $G$ has type $G_2$ or $F_4$, then $V(\hsr)$ is the space of trace zero elements in an octonion or Albert algebra, and the identity element generates an invariant subspace if $\car k = 2$ or 3 respectively.

\subsection*{Acknowledgements} The authors thank Dick Gross for suggesting the problem that led to the formulation of Theorem~\ref{MT}\eqref{MT.min}\eqref{MT.E8}, and for several useful discussions pertaining to the contents of the paper. 
The authors also thank Henning Andersen, James Humphreys, Jens Carsten Jantzen, George Lusztig, and the referee for their suggestions and comments on an earlier version of this manuscript.
%%%%%%%%%%%%%%%%%%%%%%%%%%%%%%%%%%%%%%%%%%%%%%%%%%%%%%%%%%%%%%%%
\section{Definitions and notation}  \label{defs}
We will follow the notation and conventions presented in \cite{Jan}.  When we refer to an algebraic group $G$, we mean a smooth affine group scheme of finite type as in \cite{SGA3:new} or \cite[Ch.~VI]{KMRT}, as opposed to its (abstract) group of $k$-points, which we denote by $G(k)$.  An example of this difference is that the natural map $\SL_p \to \PGL_p$ has nontrivial kernel the group scheme $\mu_p$, yet for $k$ a field of prime characteristic $p$, the map $\SL_p(k) \to \PGL_p(k)$ is injective.

Let $G$ be a simple simply connected 
algebraic group, $T$ be a maximal split torus of $G$ and $\Phi$ be the root system 
associated to $(G,T)$.  Fix a choice of simple roots $\Delta$.
%The positive (resp. negative) roots are 
%$\Phi^{+}$ (resp. $\Phi^{-}$) and $\Delta$ will denote a base consisting of simple roots. 
Let $B$ be a Borel subgroup containing $T$ corresponding 
to the negative roots and let $U$ denote the unipotent radical of $B$. 

One can naturally view $\Phi$ as contained in a Euclidean space ${\mathbb E}$ with
 inner product $\langle\ , \ \rangle$. Let $X(T)$ be the integral weight lattice 
obtained from $\Phi$. The set $X(T)$ has a partial
ordering defined as follows. If $\lambda,\mu\in X(T)$, then
$\lambda\geq \mu$ if and only if $\lambda - \mu\in \sum_{\alpha\in
\Delta}\mathbb{N}\alpha$.

\begin{table}[bth]
{\centering\noindent\makebox[450pt]{
\begin{tabular}[c]{p{2.2in}|p{2.2in}}

$\small{(A_n)~~}$
\begin{picture}(7,2)(0,0)
\multiput(0,1)(20,0){3}{\circle{6}}
\multiput(62,1)(20,0){3}{\circle{6}}
\put(0,1){\circle*{3}}
\put(0,1){\line(1,0){20}}
\put(20,1){\circle*{3}}
\put(20,1){\line(1,0){20}}
\put(40,1){\circle*{3}}
\put(40,-1.6){ \mbox{$\cdots$}}
\put(62,1){\circle*{3}}
\put(62,1){\line(1,0){20}}
\put(82,1){\circle*{3}}
\put(82,1){\line(1,0){20}}
\put(102,1){\circle*{3}}

\put(-2,-7){\mbox{\tiny $1$}}
\put(18,-7){\mbox{\tiny $2$}}
\put(38,-7){\mbox{\tiny $3$}}
\put(54,-7){\mbox{\tiny $n$$-$$2$}}
\put(75,-7){\mbox{\tiny $n$$-$$1$}}
\put(100,-7){\mbox{\tiny $n$}}
\end{picture}
\vspace{0.5cm}

&

$\small{(E_6)~~}$
\begin{picture}(7,2)(0,0)
\put(0,-5){\circle*{3}}
\put(0,-5){\circle{6}}
\put(0,-5){\line(1,0){15}}
\put(15,-5){\circle*{3}}
\put(15,-5){\line(1,0){15}}
\put(30,-5){\circle*{3}}
\put(30,10){\circle*{3}}
\put(30,-5){\line(0,1){15}}
\put(30,-5){\line(1,0){15}}
\put(45,-5){\circle*{3}}
\put(45,-5){\line(1,0){15}}
\put(60,-5){\circle*{3}}
\put(60,-5){\circle{6}}

\put(-2,-13){\mbox{\tiny $1$}}
\put(13,-13){\mbox{\tiny $3$}}
\put(28,-13){\mbox{\tiny $4$}}
\put(43,-13){\mbox{\tiny $5$}}
\put(58.5,-13){\mbox{\tiny $6$}}
\put(33,9){\mbox{\tiny $2$}}
\put(22,9){\mbox{\tiny $\star$}}

\end{picture}

\\

$\small{(B_n)~~}$
\begin{picture}(7,2)(0,0)
\put(0,1){\circle*{3}}
\put(0,1){\line(1,0){20}}
\put(20,1){\circle*{3}}
\put(20,1){\line(1,0){20}}
\put(40,1){\circle*{3}}
\put(40,-1.6){ \mbox{$\cdots$}}
\put(62,1){\circle*{3}}
\put(62,1){\line(1,0){20}}
\put(82,1){\circle*{3}}
\put(82,2){\line(1,0){20}}
\put(82,0){\line(1,0){20}}
\put(89,-1){{\tiny\mbox{$>$}}}
\put(102,1){\circle*{3}}
\put(102,1){\circle{6}}

\put(-2,-7){\mbox{\tiny $1$}}
\put(-2,5){\mbox{\tiny $\star$}}
\put(18,-7){\mbox{\tiny $2$}}
\put(38,-7){\mbox{\tiny $3$}}
\put(54,-7){\mbox{\tiny $n$$-$$2$}}
\put(75,-7){\mbox{\tiny $n$$-$$1$}}
\put(100,-7){\mbox{\tiny $n$}}
\end{picture}
\vspace{0.5cm}

&

$\small{(E_7)~~}$
\begin{picture}(7,2)(0,0)
\put(0,-5){\circle*{3}}
\put(0,-5){\circle{6}}
\put(0,-5){\line(1,0){15}}
\put(15,-5){\circle*{3}}
\put(15,-5){\line(1,0){15}}
\put(30,-5){\circle*{3}}
\put(30,-5){\line(1,0){15}}
\put(45,-5){\circle*{3}}
\put(45,-5){\line(1,0){15}}
\put(45,10){\circle*{3}}
\put(45,-5){\line(0,1){15}}
\put(60,-5){\circle*{3}}
\put(60,-5){\line(1,0){15}}
\put(75,-5){\circle*{3}}

\put(-2,-13){\mbox{\tiny $7$}}
\put(13,-13){\mbox{\tiny $6$}}
\put(28,-13){\mbox{\tiny $5$}}
\put(43,-13){\mbox{\tiny $4$}}
\put(58.5,-13){\mbox{\tiny $3$}}
\put(73,-13){\mbox{\tiny $1$}}
\put(73,0){\mbox{\tiny $\star$}}

\put(47.5,9){\mbox{\tiny $2$}}
\end{picture}

\\

$\small{(C_n)~~}$
\begin{picture}(7,2)(0,0)
\put(0,1){\circle*{3}}
\put(0,1){\line(1,0){20}}
\put(20,1){\circle*{3}}
\put(20,1){\line(1,0){20}}
\put(40,1){\circle*{3}}
\put(40,-1.6){ \mbox{$\cdots$}}
\put(62,1){\circle*{3}}
\put(62,1){\line(1,0){20}}
\put(82,1){\circle*{3}}
\put(82,2){\line(1,0){20}}
\put(82,0){\line(1,0){20}}
\put(89,-1){{\tiny\mbox{$<$}}}
\put(102,1){\circle*{3}}
\put(0,1){\circle{6}}

\put(-2,-7){\mbox{\tiny $1$}}
\put(18,-7){\mbox{\tiny $2$}}
\put(18,5){\mbox{\tiny $\star$}}
\put(38,-7){\mbox{\tiny $3$}}
\put(54,-7){\mbox{\tiny $n$$-$$2$}}
\put(75,-7){\mbox{\tiny $n$$-$$1$}}
\put(100,-7){\mbox{\tiny $n$}}
\end{picture}
\vspace{0.5cm}

&

$\small{(E_8)~~}$
\begin{picture}(7,2)(0,0)
\put(0,-5){\circle*{3}}
\put(0,-5){\line(1,0){15}}
\put(15,-5){\circle*{3}}
\put(15,-5){\line(1,0){15}}
\put(30,-5){\circle*{3}}
\put(30,-5){\line(1,0){15}}
\put(45,-5){\circle*{3}}
\put(60,-5){\line(0,1){15}}
\put(60,-5){\circle*{3}}
\put(60,10){\circle*{3}}
\put(75,-5){\circle*{3}}
\put(75,-5){\line(1,0){15}}
\put(90,-5){\circle*{3}}
\put(45,-5){\line(1,0){15}}
\put(60,-5){\line(1,0){15}}

\put(-2,-13){\mbox{\tiny $8$}}
\put(-2,0){\mbox{\tiny $\star$}}

\put(13,-13){\mbox{\tiny $7$}}
\put(28,-13){\mbox{\tiny $6$}}
\put(43,-13){\mbox{\tiny $5$}}
\put(58.5,-13){\mbox{\tiny $4$}}
\put(73,-13){\mbox{\tiny $3$}}
\put(88,-13){\mbox{\tiny $1$}}
\put(62.5,9){\mbox{\tiny $2$}}
\end{picture}

\\

$\small{(D_n)~~}$
\begin{picture}(7,2)(0,0)
\put(0,1){\circle*{3}}
\put(0,1){\circle{6}}
\put(0,1){\line(1,0){20}}
\put(20,1){\circle*{3}}
\put(20,1){\line(1,0){20}}
\put(40,1){\circle*{3}}
\put(40,-1.6){ \mbox{$\cdots$}}
\put(62,1){\circle*{3}}
\put(62,1){\line(1,0){20}}
\put(82,1){\circle*{3}}
\put(82,2){\line(4,3){15}}
\put(82,0){\line(4,-3){15}}
\put(96.5,12.9){\circle*{3}}
\put(96.5,12.9){\circle{6}}
\put(96.5,-10.9){\circle*{3}}
\put(96.5,-10.9){\circle{6}}

\put(-2,-7){\mbox{\tiny $1$}}

\put(18,5){\mbox{\tiny $\star$}}
\put(18,-7){\mbox{\tiny $2$}}
\put(38,-7){\mbox{\tiny $3$}}
\put(54,-7){\mbox{\tiny $n$$-$$3$}}
\put(86,-0.5){\mbox{\tiny $n$$-$$2$}}
\put(100,-12){\mbox{\tiny $n$}}
\put(100,11.8){\mbox{\tiny $n$$-$$1$}}
\end{picture}

&

$\small{(F_4)~~}$
\begin{picture}(7,2)(0,0)
\put(0,1){\circle*{3}}
\put(0,1){\line(1,0){15}}
\put(15,1){\circle*{3}}
\put(15,0){\line(1,0){15}}
\put(15,2){\line(1,0){15}}
\put(19,-1){{\tiny\mbox{$>$}}}
\put(30,1){\circle*{3}}
\put(30,1){\line(1,0){15}}
\put(45,1){\circle*{3}}

\put(-2,-7){\mbox{\tiny $1$}}
\put(13,-7){\mbox{\tiny $2$}}
\put(28,-7){\mbox{\tiny $3$}}
\put(43,-7){\mbox{\tiny $4$}}
\put(43,5){\mbox{\tiny $\star$}}

\put(65,1){$\small{(G_2)~~}$}
\put(92,1){\circle*{3}}
\put(92,0.1){\line(1,0){15}}
\put(92,1.1){\line(1,0){15}}
\put(92,2.1){\line(1,0){15}}
\put(96,-1){{\tiny\mbox{$<$}}}
\put(107,1){\circle*{3}}

\put(90,-7){\mbox{\tiny $1$}}
\put(90,5){\mbox{\tiny $\star$}}

\put(105,-7){\mbox{\tiny $2$}}
\end{picture}

\end{tabular}
}}
\vskip .5cm
\caption{Dynkin diagrams of simple root systems, with simple roots numbered.  A circle around vertex $i$ indicates that the fundamental weight $\omega_i$ is minuscule.  A $\star$ indicates that $\omega_i$ is the highest short root $\hsr$.  The highest short root of $A_n$ is $\omega_1 + \omega_n$.} \label{dynks.table}
\end{table}

For $\alpha^{\vee}:=\frac{2\alpha}{\langle\alpha,\alpha\rangle}$ the coroot
corresponding to $\alpha\in \Phi$, 
the set of dominant integral weights is defined by
$$X(T)_{+}:=\{\lambda\in X(T):\ 0\leq \langle\lambda,\alpha^{\vee}\rangle\
\text{for all $\alpha \in \Delta$} \}.$$
The fundamental weights $\omega_{j}$ for $j=1,2,\dots, n$ are the dual basis to 
the  simple coroots. That is, if $\Delta=\{\alpha_{1},\alpha_{2},\dots,\alpha_{n}\}$ then 
$\langle \omega_{i},\alpha_{j}^{\vee} \rangle =\delta_{i,j}$.  

We call the weights in $X(T)_+$ that are minimal with respect to the partial ordering \emph{minuscule} weights.  Note that the zero weight is minuscule by this definition (in some references this is not the case).  Every nonzero minuscule weight is a fundamental dominant weight (one of the $\omega_i$'s), and we have marked them in Table \ref{dynks.table}.  We remark that there is a unique minuscule weight in each coset of the root lattice $\ZZ\Phi$ in the weight lattice $X(T)$ by \cite[\S{VI.2}, Exercise 5a]{Bou:g4} or \cite[\S13, Exercise 13]{Hum:Lie}; this can be an aid for remembering the number of minuscule weights for each type and for determining which minuscule weight lies below a given dominant weight.

%%%%%%%%%%%%%%%%%%%%%%%%%%%%%%%%%%%%%%%%%%%%%%%%%%%%%%%%
\subsection*{Generalization of Theorem \ref{MT} to split reductive groups} 

Suppose now that $G$ is a split reductive group over a field $k$.  Then there is a unique split reductive group scheme over $\ZZ$ whose base change to $k$ is $G$, which we denote also by $G$; it is the split reductive group scheme over $\ZZ$ with the same root datum as $G$.  Moreover, there is a split reductive group scheme $G'$ over $\ZZ$ with a central isogeny $G' \to G$ where $G' = \prod_{i=0}^r G_i$ for $G_0$ a torus and $G_i$ simple and simply connected for $i \ne 0$, cf.~\cite[XXI.6.5.10]{SGA3:new}.  A Weyl module $V(\la)$ for $G$ restricts to a Weyl module $V(\sum \la_i)$ for $G'$, where $\la_i$ denotes the restriction of $\la$ to a maximal torus in $G_i$, and as in \cite[Lemma I.3.8]{Jan} we have $V(\sum \la_i) \cong \ot_{i=0}^r V(\la_i)$ where $V(\la_0)$ is one-dimensional.  Therefore, $V(\la) \ot k$ is an irreducible $G$-module for every field $k$ if and only if $V(\la_i) \ot k$ is an irreducible $G_i$-module for every $k$, i.e., if and only if $(G_i, V(\la_i))$ satisfies condition \eqref{MT.min} or \eqref{MT.E8} of Theorem \ref{MT} for all $i \ne 0$.

%For a simple algebraic group $G$,the irreducible modules will be denoted
%by $L(\lambda)$ and the induced modules by $H^{0}(\lambda)=\text{
%ind}_{B}^{G} \lambda$, where $\lambda\in X(T)_{+}$. The irreducible module 
%$L(\lambda)=\text{soc}_{G} H^{0}(\lambda)$. The Weyl module $V(\lambda)$ has 
%simple head $L(\lambda)$ and is defined as 
%$V(\lambda)=H^{0}(-w_{0}\lambda)^{*}$. 

%%%%%%%%%%%%%%%%%%%%%%%%%%%%%%%%%%%%%%%%%%%%%%%%%%%%%%%%
\section{Restriction to Levi subgroups}
For $J\subseteq \Delta$, let $L_{J}$ be the Levi subgroup of $G$ generated by the maximal torus $T$ and the root subgroups corresponding to roots that are linear combinations of elements of $J$.  Set 
\[
X_{J}(T)_{+}:=\{\lambda\in X(T)\ :\ 0\leq \langle \lambda,\alpha^{\vee} \rangle \ \text{for all $\alpha\in J$}\}.\]
For $\lambda\in X_{J}(T)_{+}$, we can construct an induced module $H^{0}_{J}(\lambda):=\text{ind}^{L_{J}}_{L_{J}\cap B}\lambda$ 
with simple $L_{J}$-socle $L_{J}(\lambda)$, and dually a Weyl module $V_{J}(\lambda)$ with head 
$L_{J}(\lambda)$.

\begin{thm}\label{thm:Levireduction} 
Let $G$ be a simple simply connected algebraic group and $J\subseteq \Delta$.
If $V(\lambda) \ot k$ is an irreducible $G$-module, then $V_J(\la) \ot k$ is an irreducible $L_J$-module.
\end{thm}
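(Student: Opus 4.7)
The plan is to prove the contrapositive: assuming $V_{J}(\la)\ot k$ has a proper nonzero $L_{J}$-submodule, I will exhibit a proper nonzero $G$-submodule of $V(\la)\ot k$. The idea is to find a maximal vector for $L_{J}$ of weight $\mu<\la$ and show that the root combinatorics force it to be a maximal vector for $G$ as well. Denote by $U^{+},U^{-}$ the unipotent radicals of opposite Borels of $G$ (so $U=U^{-}$ in the paper's convention) and by $U^{\pm}_{J}$ their intersections with $L_{J}$; let $v_{\la}\in V(\la)$ generate the highest weight line.

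The first step is to realize $V_{J}(\la)\ot k$ as an $L_{J}$-submodule of $V(\la)\ot k$. The submodule $M:=\Dist(L_{J})v_{\la}=\Dist(U^{-}_{J})v_{\la}$ of $V(\la)$ is a highest weight $L_{J}$-module of highest weight $\la$. The PBW decomposition of $\Dist(U^{-})$ as the tensor product of $\Dist(U^{-}_{J})$ and the subalgebra generated by the remaining negative root vectors, combined with $V(\la)=\Dist(U^{-})v_{\la}$, identifies $M$ with a $\ZZ$-direct summand of $V(\la)$ whose weight multiplicities match those of $V_{J}(\la)$. Hence $M\cong V_{J}(\la)$ as $\Dist(L_{J})$-modules, and base change produces an $L_{J}$-equivariant inclusion $V_{J}(\la)\ot k\hookrightarrow V(\la)\ot k$.

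Now assume $V_{J}(\la)\ot k$ is reducible. Its simple head is $L_{J}(\la)\ot k$, so every proper submodule lies in the unique maximal submodule (the radical); pick a weight vector $v$ in the radical of maximal weight $\mu$. Then $\mu<\la$ (since $v_{\la}$ generates $V_{J}(\la)$ and so is not in the radical), and $v$ is annihilated by $\Dist(U^{+}_{J})_{+}$ by maximality of $\mu$. I claim $v$ is in fact killed by $\Dist(U^{+})_{+}$. For a positive root $\beta\notin\ZZ J$, write $\la-\mu=\sum_{\alpha\in J}n_{\alpha}\alpha$ with $n_{\alpha}\in\ZZ_{\ge 0}$ and $\beta=\sum_{\alpha\in\Delta}m_{\alpha}\alpha$ with some $m_{\gamma}>0$ for $\gamma\notin J$; then the coefficient of $\gamma$ in $\la-(\mu+\beta)$ equals $-m_{\gamma}<0$, so $\mu+\beta$ is not a weight of $V(\la)\ot k$, and any root vector for $\beta$ annihilates $v$. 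Combined with $\Dist(U^{+}_{J})_{+}v=0$, this shows $v$ is a maximal vector for $G$.

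Finally, by the triangular decomposition $\Dist(G)=\Dist(U^{-})\Dist(T)\Dist(U^{+})$, the $G$-submodule of $V(\la)\ot k$ generated by $v$ equals $\Dist(U^{-})v$, whose weights are all $\le\mu<\la$. In particular this submodule does not contain $v_{\la}$, so it is a proper nonzero $G$-submodule of $V(\la)\ot k$, contradicting the hypothesis that $V(\la)\ot k$ is irreducible. The main technical point is the embedding step: one must verify at the hyperalgebra level over $\ZZ$ that $\Dist(L_{J})v_{\la}\subseteq V(\la)$ really is the Weyl module $V_{J}(\la)$ and splits off as a $\ZZ$-summand, so that the embedding survives reduction mod $p$. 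Once that is in hand, the remainder is routine root-combinatorial bookkeeping.
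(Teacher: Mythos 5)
Your proof is correct, but it takes a genuinely different route from the paper's. The paper argues directly: by Smith's theorem \cite{Smith}, in characteristic $0$ the space of $Q_J$-fixed points on $V(\la)$ (for $Q_J$ the unipotent radical of $P_J=L_JQ_J$) is exactly $V_J(\la)$; reducing a $\ZZ$-form mod $p$ can only increase the dimension of the fixed-point space, so $\dim (V(\la)\ot k)^{Q_J}\ge\dim V_J(\la)$, while Smith's theorem applied to the irreducible module gives $L(\la)^{Q_J}=L_J(\la)$, and the trivial inequality $\dim L_J(\la)\le\dim V_J(\la)$ then forces $L_J(\la)=V_J(\la)$. You instead prove the contrapositive: you embed $V_J(\la)\ot k$ into $V(\la)\ot k$ as the sum of the weight spaces with weights in $\la-\ZZ J$ --- this is precisely the $L_J$-direct summand $V(\la)|_{L_J}\cong V_J(\la)\oplus M'$ that the paper itself records (with references to \cite{CN}, \cite{Smith}, and \cite[II.5.21]{Jan}) in the proposition following this theorem, so the ``main technical point'' you flag is legitimately available --- and then manufacture an explicit proper $G$-submodule from a maximal vector of the radical. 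Both arguments rest on the same structural input (Smith's description of $V(\la)|_{L_J}$), but yours is more self-contained, avoiding both the semicontinuity of fixed points under reduction mod $p$ and the second application of Smith's theorem to $L(\la)$, at the cost of some weight bookkeeping; it also has the small virtue of exhibiting the proper submodule rather than only comparing dimensions. One point of hygiene: knowing that each divided power $e_\beta^{(m)}$ (for $\beta$ a positive root) kills $v$ does not by itself yield $\Dist(U^{+})_{+}v=0$; you should run your weight computation on PBW monomials of $\Dist(U^{+})$ --- any nonconstant monomial either lies in $\Dist(U^{+}_{J})_{+}$, where your maximality argument applies, or moves $v$ to a weight $\mu+\sigma$ in which some simple root $\gamma\notin J$ occurs with strictly positive coefficient, hence to $0$. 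With that adjustment the argument is complete.
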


\begin{proof} %[Proof of Theorem \ref{thm:Levireduction}]
For $k$ of characteristic 0,  $V_J(\lambda)$ is just the set of fixed points of $Q_J$ on $V(\lambda)$  (the unipotent
radical of the parabolic $P_J=L_JQ_J$);  this is part of \cite{Smith}.
Taking a $\ZZ$-form and reducing modulo $p$, we see that the dimension of the space of fixed points of $Q_J$ on $V(\lambda)$ can only go up in characteristic $p$.

So if $V(\lambda)=L(\lambda)$,  then again by \cite{Smith},  the fixed points of $Q_J$
on this module is $L_J(\lambda)$ but has dimension at least $V_J(\lambda)$. The 
other inequality is clear since $L_J(\lambda)$ is a quotient
of $V_J(\lambda)$, so $L_J(\lambda)=V_J(\lambda)$.
\end{proof}

\begin{rmk}
Given a group $G$ and a particular prime $p$, there are few known necessary and sufficient conditions in terms of $\lambda$ for the Weyl module $V(\lambda) \ot k$ to be irreducible over every field $k$ of characteristic $p$.  There is an easy-to-apply statement for $G = \SL_2$.  For $G = \SL_n$, Jantzen gives a necessary and sufficient condition,  but it is less easy to apply.  There are also sporadic results in one direction or another, such as consequences of the Linkage Principle like \cite[II.6.24]{Jan} or irreducibility when $\lambda$ is restricted and $\dim V(\lambda)$ is small.  Theorem \ref{thm:Levireduction} provides an easy way to get necessary conditions on $\lambda$ by taking various small $J$.  Writing $\lambda = \sum c_i \omega_i$ and taking $J = \{\alpha_i\}$ one can apply the $\SL_2$ criterion to constrain the possible values of $c_i$.  Taking $J$ to be pairs of adjacent roots of the same length allows one to reduce to the case of $A_2$, for which a lot is known, see \cite[II.8.20]{Jan}.
\end{rmk}

We mention the following related result that includes the case where $V(\la) \ot k$ is reducible.  

\begin{prop}
For every $\la \in X(T)_+$, every $J\subseteq \Delta$, and every field $k$, the irreducible representation $L_J(\la)$ of $L_J$ is a direct summand of $L(\la)\vert_{L_J}$.
\end{prop}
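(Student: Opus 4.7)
The plan is to realize $L_J(\lambda)$ as an $L_J$-submodule of $L(\lambda)\vert_{L_J}$ using a theorem of S.~Smith, and then split it off as an orthogonal direct summand via the contravariant bilinear form on $L(\lambda)$. In characteristic zero the statement is immediate by complete reducibility of rational representations of reductive groups, so the content lies in characteristic $p>0$.

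For the embedding step, I would invoke \cite{Smith} (already implicitly used in the proof of Theorem~\ref{thm:Levireduction}): the space of $Q_J$-fixed points $L(\lambda)^{Q_J}$ is a simple $L_J$-submodule of $L(\lambda)$. The highest weight vector $v_\lambda \in L(\lambda)_\lambda$ is fixed by $Q_J$, so it lies in $L(\lambda)^{Q_J}$ and forces the identification $L(\lambda)^{Q_J} \cong L_J(\lambda)$ as $L_J$-modules.

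For the splitting step, recall that the Weyl module $V(\lambda)$ carries a $G$-contravariant bilinear form with respect to the Chevalley anti-involution $\tau$ of $G$ (which fixes $T$ pointwise and sends each root subgroup $U_\alpha$ to $U_{-\alpha}$), whose radical is the maximal proper submodule of $V(\lambda)$; see \cite[II.8.17]{Jan}. This descends to a nondegenerate contravariant form $(\cdot,\cdot)$ on $L(\lambda)$. Because $\tau$ acts trivially on $T$, distinct $T$-weight spaces are mutually orthogonal, and combined with nondegeneracy this forces $(v_\lambda,v_\lambda) \neq 0$. Since $\tau$ restricts to the Chevalley anti-involution of $L_J$, the form $(\cdot,\cdot)$ is also $L_J$-contravariant, and hence the orthogonal complement of any $L_J$-submodule is again $L_J$-invariant.

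The restriction of $(\cdot,\cdot)$ to the irreducible $L_J$-submodule $L_J(\lambda) = L(\lambda)^{Q_J}$ takes the nonzero value $(v_\lambda,v_\lambda)$, so it is nonzero; and any $L_J$-contravariant bilinear form on an irreducible $L_J$-module has radical either $0$ or the whole module, so such a form is either zero or nondegenerate. Hence the restriction to $L_J(\lambda)$ is nondegenerate, and a short linear algebra computation produces the orthogonal direct sum decomposition $L(\lambda) = L_J(\lambda) \oplus L_J(\lambda)^\perp$ of $L_J$-modules, which is the desired direct summand. The only delicate point is the appeal to Smith's theorem for the embedding in arbitrary characteristic; once the inclusion $L_J(\lambda) \subseteq L(\lambda)$ is in hand, the contravariant form argument is essentially formal.
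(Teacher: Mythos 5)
Your proof is correct, and it takes a genuinely different route from the one in the paper. The paper's argument never mentions the contravariant form: it decomposes $H^0(\la)\vert_{L_J}$ into the sum of the weight spaces $H^0(\la)_{\la-\nu}$ with $\nu\in\ZZ J$ (which is exactly $H^0_J(\la)$) plus an $L_J$-stable complement, deduces that $L_J(\la)$ sits in the $L_J$-socle of $L(\la)$, runs the dual argument with the Weyl module $V(\la)$ to place $L_J(\la)$ in the $L_J$-head of $L(\la)$, and then concludes it is a summand because $L_J(\la)$ occurs with multiplicity one as an $L_J$-composition factor (its highest weight space is the one-dimensional $\la$-weight space). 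Your argument instead embeds $L_J(\la)$ via Smith's theorem and splits it off as $L_J(\la)^\perp$'s complement using the nondegenerate contravariant form of \cite[II.8.17]{Jan}; the key nonvanishing $(v_\la,v_\la)\neq 0$ plays the role that multiplicity one plays in the paper, and both arguments ultimately rest on the one-dimensionality of $L(\la)_\la$. Your version is shorter once the contravariant form machinery is granted and has the virtue of exhibiting the complement explicitly; the paper's version is form-free (so it adapts to settings where no contravariant form is available) and simultaneously establishes the auxiliary decomposition $V(\la)\vert_{L_J}\cong V_J(\la)\oplus M'$ that it wants on record. One small point of care: with the paper's convention that $B$ corresponds to the negative roots, the unipotent radical whose fixed points contain $v_\la$ is that of the \emph{opposite} parabolic $P_J^+$, so you should say so (or simply define $L_J(\la)\subseteq L(\la)$ as $\bigoplus_{\nu\in\ZZ J}L(\la)_{\la-\nu}$, as the paper effectively does); this is a matter of bookkeeping, not of substance.
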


\begin{proof} For the sake of completeness we describe the analysis given in \cite[Section 8]{CN} which 
follows \cite{Smith} and \cite[II.5.21]{Jan}. There exists a weight space decomposition for the induced module 
given by 
$$
H^{0}(\lambda)=\left(\bigoplus_{\nu\in 
{\mathbb Z}J}H^{0}(\lambda)_{\lambda-\nu}\right) \oplus M.
$$ 
where $M$ is the direct sum of all weight spaces $H^{0}(\lambda)_{\sigma}$ where 
$\sigma\neq \lambda-\nu$ for any $\nu\in {\mathbb Z}J$. Furthermore, 
$H^{0}_{J}(\lambda)=\oplus_{\nu\in {\mathbb Z}J}H^{0}(\lambda)_{\lambda-\nu}$ with  
the aforementioned decomposition being $L_{J}$-stable. This allows us to identify an $L_{J}$-direct summand 
\begin{equation} 
H^{0}(\lambda)|_{L_{J}}\cong H^{0}_{J}(\lambda)\oplus M. 
\end{equation} 

By definition $L(\lambda)=\text{soc}_{G}(H^{0}(\lambda))$. This
implies that 
$\text{soc}_{L_{J}}L(\lambda)\subseteq \text{soc}_{L_{J}}(H^{0}(\lambda))$. 
Note that 
\begin{equation}
L_{J}(\lambda)=\text{soc}_{L_{J}}(H^{0}_{J}(\lambda))\subseteq 
\text{soc}_{L_{J}}(H^{0}(\lambda)).
\end{equation}
Now $L_{J}(\lambda)$ appears as an
$L_{J}$-composition factor of $L(\lambda)$ and 
$H^{0}(\lambda)$ with multiplicity one. 
Consequently, $L_{J}(\lambda)$ must occur 
in $\text{soc}_{L_{J}}L(\lambda)$. 

One can also apply the same argument for Weyl modules and see that 
\begin{equation} \label{eq;Weyldecomp}
V(\lambda)|_{L_{J}}\cong V_{J}(\lambda)\oplus M^{\prime}. 
\end{equation} 
for some $L_{J}$-module $M^{\prime}$. 
By an argument dual to the one in the preceding paragraph, we 
deduce that 
$L_{J}(\lambda)$ appears in the 
head of $L(\lambda)|_{L_{J}}$. The fact that 
$L_{J}(\lambda)$ has multiplicity one in $L(\lambda)$ now shows that $L_{J}(\lambda)$ 
is an $L_{J}$-direct summand of $L(\lambda)$. 
\end{proof}

%%%%%%%%%%%%%%%%%%%%%%%%%%%%%%%%%%%%%%%%%%%%%%%%
\section{The case of fundamental weights}  \label{fund.sec}

We now verify Theorem \ref{MT} for every fundamental weight.  We abuse notation and write $V(\la)$ instead of $V(\la) \ot k$.

\subsection*{Type $A_{n}$ ($n \ge 1$)} In this case, all the fundamental weights are minuscule, so $V(\lambda)=L(\lambda)$ for all $\lambda=\omega_{j}$, $j=1,2,\dots,n$. 

\subsection*{Type $B_{n}$ ($n \ge 2$)} For $B_n$, we claim that \emph{$V(\omega_i)$ is reducible for $1 \le i < n$ and $\car k = 2$}.

The split adjoint group of type $B_n$ is $\SO(q)$ for a quadratic form $q$ on a vector space $X$ of dimension $2n+1$ where the tautological action on $X$ is $V(\omega_1)$, see \cite{KMRT} or \cite[\S23]{Borel}.  As $\car k = 2$, the bilinear form $b_q$ deduced from $q$ by the formula $b_q(x,y) := q(x+y) - q(x) - q(y)$ is necessarily degenerate with 1-dimensional radical, providing an $\SO(q)$-invariant line, call it $S$.

For $2 \le i < n$, we restrict to the Levi subgroup of type $B_{n-i+1}$ corresponding to $J = \{ \alpha_i, \alpha_{i+1}, \ldots, \alpha_n\}$.  By the previous paragraph, $V_J(\omega_i)$ is reducible in characteristic 2, hence $V(\omega_i)$ is by Theorem \ref{thm:Levireduction}.  

Alternatively, one can see the reducibility concretely by noticing that $V(\omega_i)$ has the same character and dimension as $\Lambda^i(V(\omega_1))$, because this is so in case $k = \C$.  In particular, $\Lambda^i(V(\omega_1))$ has a unique maximal weight, the highest weight of $V(\omega_i)$, and there is a nonzero $\SO(q)$-equivariant map $\phi \!: V(\omega_i) \to \Lambda^i(V(\omega_1))$.  As $S \wedge \Lambda^{i-1}(V(\omega_1))$ is a proper and $\SO(q)$-invariant subspace of $\Lambda^i (V(\omega_1))$, it follows that $V(\omega_i)$ is reducible.

\subsection*{Type $D_n$ ($n \ge 4$)} 
For type $D_n$, we claim that \emph{$V(\omega_i)$ is reducible for $2 \le i \le n - 2$ and $\car k = 2$.} 

The representation $V(\omega_2)$ has the same character and dimension as $\Lambda^2(V(\omega_1))$.  The alternating bilinear form $b_q$ deduced from the invariant quadratic form $q$ on $V(\omega_1)$ gives an invariant line in $\Lambda^2(V(\omega_1))$ --- i.e., $D_n$ maps into $C_n$, which is already reducible on $\Lambda^2(V(\omega_1))$ --- proving the claim for $i = 2$.

Alternatively,
$V(\omega_2)$ is the adjoint action on the Lie algebra of $\Spin_{2n}$ (when $\car k = 2$, this is distinct from $\Lie(\SO_{2n})$), and the center $S$ is a proper submodule, namely $\Lie(\mu_2 \times \mu_2)$ (if $n$ is even) or $\Lie(\mu_4)$ (if $n$ is odd).

For $2 < i \le n -2$, we may use either of the arguments employed in the $B_n$ case.
%: restriction to the Levi subgroup of type $D_{n-i+2}$ corresponding to $J = \{ \alpha_{i-1}, \alpha_i, \ldots, \alpha_n \}$, where we find that $V_J(\omega_i)$ is reducible by the previous paragraph, or by noting that there is an equivariant map $V(\omega_i) \to \Lambda^i(V(\omega_1))$ and that $S \wedge \Lambda^{i - 2}(V(\omega_1))$ is a proper submodule in the codomain.

\subsection*{Type $C_{n}$ ($n \ge 3$)} For type $C_n$ with $n \ge 3$, %the fundamental weight $\omega_1$ is minuscule.
%For $\omega_2$,
 \cite[Th.~2(iv)]{PremetSup} gives that $V(\omega_2)$ is reducible when $\car k = p$ if and only if the prime $p$ divides $n$, compare \cite[p.~287]{Jan}.  For $\omega_i$ with $2 < i < n$, restricting to the Levi of type $C_{n-i+2}$ corresponding to $J = \{ \alpha_{i-1}, \alpha_i, \ldots, \alpha_n \}$ shows that $V_J(\omega_i)$ is reducible if $p$ divides $n-i+2$.  For $i = n$, we restrict to the Levi subgroup of type $C_2 = B_2$ corresponding to $J = \{ \alpha_{n-1}, \alpha_n \}$ to find that $V_J(\omega_n)$ is the 5-dimensional natural module for $B_2$, which is reducible in characteristic 2.  

\subsection*{Exceptional types} For exceptional types, tables of which fundamental weights $\omega$ have $V(\omega)$ reducible in which characteristics can be found in  \cite[p.~299]{Jan:first} or, for smaller dimensions, in \cite{Lubeck}.  These confirm our main theorem, and, in case $V(\omega) \ot k$ is reducible for some $k$, it is so for a $k$ with $\car k = 2$ or 3.

We remark that for the representations $V(\omega_i)$ of $E_8$ for $i \ne 8$, one can verify Theorem \ref{MT} by restricting to a Levi and using induction, instead of referring to \cite{Jan:first} or \cite{Lubeck} directly.

Because it is such an important example, we mention specifically that the adjoint representation $V(\omega_8)$ of $E_8$ is irreducible because $\Lie(E_8)$ is simple for every field, see \cite{St:aut} or \cite{Hogeweij}.  Here is an alternative argument provided to us by Gross: As $E_8$ is simply-laced, the Weyl group acts transitively on the roots, so the normalizer $N_{E_8}(T)$ of a split torus has an irreducible submodule in
the adjoint representation $\Lie(E_8)$ given by the sum of all the root spaces.
%(That's the argument one uses to prove irreducibility of the minuscule
%representations for all primes p.)
The miracle that is special to $E_8$ is that the Weyl group acts irreducibly on the submodule
$\Lie(T)$, which is the $E_8$-lattice mod $\car k$.\footnote{This is an illustration of a specific case, for $G$ the Weyl group of $E_8$, of Thompson's question mentioned in the introduction.}  Then the restriction of
the representation $\Lie(E_8)$ to $N_{E_8}(T)$ is the direct sum of two irreducible representations,
one of dimension 240 and the other of dimension 8.  Since $E_8$ has no nontrivial map into $\SL_8$, it  does not preserve either submodule, and so acts irreducibly on $\Lie(E_8)$.

This is in contrast to the case where $G$ is simple of type other than $E_8$, where $N_G(T)$ acts reducibly on $\Lie(T)$ for some characteristic (2 for types $B$, $C$, $D$, $E_7$ and $F_4$; 3 for $E_6$ and $G_2$; and dividing $n$ for type $A_{n-1}$).  And of course if $G$ has roots of different lengths and is simply connected, then for $\car k = 2$ or 3, the short roots generate a subalgebra of $\Lie(G)$ invariant under $G$, see e.g.~\cite[Lemma 3.2]{Hiss} or \cite[p.~1121]{St:aut}.

Here is yet another argument to see that $\Lie(E_8)$ is an irreducible representation for every field $k$.  Namely, it is a special case of the following observation: \emph{If $G$ is simple and simply connected over a field $k$, the center $Z$ of $G$ is \'etale\footnote{For example, this is true if $\car k$ is ``very good'' for $G$.}, and all the roots of $G$ have the same length, then $\Lie(G)$ is an irreducible representation of $G$.}  To prove this general statement, note that the natural map $\Lie(G) \to \Lie(G/Z)$ has kernel $\Lie(Z) = 0$, so is an isomorphism by dimension count.  But the domain is the Weyl module $V(\hr)$ and the codomain is its dual $V(\hr)^* = H^0(\hr)$ because of the assumption on the roots \cite[3.5]{G:vanish}.  Since $V(\hr) \cong H^0(\hr)$, they are irreducible $G$-modules.

%This handles all the fundamental representations of $G_2$ (A.49), $F_4$ (A.50), and $E_6$ (A.51).    
%
%\subsection{Type $E_{7}$} For Type $E_7$ (A.52), the central vertex, $\omega_{4}$ in Bourbaki, is left open.
%
%\subsection{Type $E_{8}$} For Type $E_8$ (A.53), one is left with 5 cases: $\omega_{2}$ $\omega_{3}$, $\omega_{4}$, $\omega_{5}$, $\omega_{6}$. 

%%%%%%%%%%%%%%%%%%%%%%%%%%%%%%%%%%%%%%%%%%%%%%%%%%%%%%%%%%%
\section{Type $B_n$, weight $\omega_1 + \omega_n$}  \label{B.sec}

Let $k$ be an algebraically closed field of characteristic $p \ge 0$.
Let $G= \Spin_{2n+1}(k)$ for $n \ge 2$.   The irreducible $G$-module $L(\omega_1)$ has dimension $2n+1$ if $\car k \ne 2$ and dimension $2n$ if $\car k = 2$. 
Moreover, the irreducible $G$-module $L(\omega_n)$ is the spin module for $G$ of dimension $2^n$.  In this section we show the following, which amounts to a specific case of Theorem \ref{MT}.   Although a different proof can be found in Lemmas 2.3.4 and 2.2.7 of \cite{BGT}, we include the proof below because it is a nice illustration of the use of finite group theory to prove a result about connected algebraic groups.

\begin{thm} \label{B.thm} Let $G=\Spin_{2n+1}(k)$ with $n \ge 2$. Then 
\[
\dim L(\omega_1 + \omega_n) = \begin{cases} 
2^n \cdot 2n   & \text{if $\car k$ does not divide $2n+1$;} \\
2^n \cdot (2n-1) & \text{if $\car k$ does divide $2n+1$.} 
\end{cases}
\]
\end{thm}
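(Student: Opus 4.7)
My strategy is to determine the multiplicity $b := [V \ot k : L(\omega_n)]$ where $V := V(\omega_1 + \omega_n)$; since $\omega_n$ is minuscule for $B_n$, $L(\omega_n) = V(\omega_n)$ has dimension $2^n$, and then $\dim L(\omega_1 + \omega_n) = \dim V - b \cdot 2^n$. The Weyl dimension is computed over $\C$ from the decomposition $V(\omega_1) \ot V(\omega_n) \cong V \oplus V(\omega_n)$, giving $\dim V = (2n+1)\cdot 2^n - 2^n = 2n \cdot 2^n$ over every field. A direct combinatorial check in the standard $\varepsilon$-coordinates of $B_n$ (using $\omega_1 = \alpha_1 + \cdots + \alpha_n$) shows that the only dominant weights $\mu \le \omega_1 + \omega_n$ in the root-lattice coset of $\omega_1 + \omega_n$ are $\omega_1 + \omega_n$ itself and $\omega_n$, so these are the only possible composition factors of $V \ot k$, and $\dim L(\omega_1 + \omega_n) = (2n - b) \cdot 2^n$.

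By the linkage principle, $b > 0$ requires $\omega_n$ and $\omega_1 + \omega_n$ to lie in a single orbit of the affine Weyl group $W_p = W(B_n) \ltimes p\ZZ\Phi$ under the dot action, where $\ZZ\Phi = \ZZ^n$ for $B_n$. Writing $\omega_1 + \omega_n + \rho = (n+1, n-1, n-2, \ldots, 1)$ and $\omega_n + \rho = (n, n-1, \ldots, 1)$, cancelation of the common coordinates reduces linkage to the question of whether $n+1 \equiv \pm n \pmod p$, which holds iff $p \mid 2n+1$ (realized by combining the sign change of the first coordinate with the translation $(2n+1, 0, \ldots, 0) \in p\ZZ^n$). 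This already gives the first case of the theorem: for $\car k \nmid 2n+1$, $b = 0$ and $\dim L(\omega_1 + \omega_n) = 2n \cdot 2^n$.

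For $p \mid 2n+1$, I would apply the Jantzen sum formula to $V$. Because of the rigid shape of $\omega_1 + \omega_n + \rho$, a case analysis over positive roots $\alpha \in \Phi^+$ shows that the reflected weight $s_{\alpha, mp}(\omega_1 + \omega_n + \rho)$ is regular only when $\alpha = \varepsilon_1$ and $mp = 2n+1$, in which case it equals $\omega_n + \rho$; for every other admissible $(\alpha, m)$, the reflected weight has a repeated or vanishing coordinate, so its Weyl character vanishes. The Jantzen sum therefore collapses to
\[
\sum_{i \ge 1} \operatorname{ch} V^i \;=\; \nu_p(2n+1) \cdot \chi(\omega_n) \;=\; \nu_p(2n+1) \cdot \operatorname{ch} L(\omega_n),
\]
from which $b \ge 1$ (and $b \le \nu_p(2n+1)$).

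The main obstacle is the matching upper bound $b \le 1$ in the case $p^2 \mid 2n+1$: the Jantzen sum alone yields only $b \le \nu_p(2n+1)$, which can be arbitrarily large, whereas the theorem asserts $b = 1$. To close this gap, I would follow the approach signaled in the introduction and pass to modular representation theory of a suitable finite subgroup of $\Spin_{2n+1}(k)$, for instance $H = \Spin_{2n+1}(\FF_p)$: since $\omega_1 + \omega_n$ is $p$-restricted, Steinberg's restriction theorem identifies $L(\omega_1 + \omega_n)|_H$ with the corresponding irreducible $\FF_p H$-module, and a dimension lower bound for this finite-group irreducible---obtainable from Brauer-character calculations or decomposition-matrix information specific to the classical group $H$---should give $\dim L(\omega_1 + \omega_n) \ge (2n-1) \cdot 2^n$, forcing $b \le 1$ and completing the proof.
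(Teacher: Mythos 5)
Your overall strategy --- reduce to computing $b=[V(\omega_1+\omega_n)\ot k:L(\omega_n)]$, kill $b$ via the linkage principle when $\car k\nmid 2n+1$, and bound it via the Jantzen sum formula when $\car k\mid 2n+1$ --- is a legitimate route, and it is genuinely different from the paper's, which instead restricts $L(\omega_1)\ot L(\omega_n)$ to the normalizer $H=2^{1+2n}\cdot A_{2n+1}$ of an extraspecial $2$-group inside $\Spin_{2n+1}$ and pins down the composition series using Dade's theorem, $\dim\Hom_H(U,U)=2$, and self-duality. Your identification of the two possible composition factors, the computation $\dim V(\omega_1+\omega_n)=2n\cdot 2^n$, and the collapse of the sum formula to $\nu_p(2n+1)\,\chi(\omega_n)$ are all correct (the signed-permutation check behind the linkage claim and the regularity check behind the collapse are real work that you assert rather than carry out, but they do go through; the paper's own remark after Corollary \ref{B.extcor} acknowledges that this is exactly the verification the sum-formula route requires). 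In particular your argument is complete whenever $\nu_p(2n+1)\le 1$.

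The genuine gap is the case $p^2\mid 2n+1$ (e.g.\ $n=4$, $p=3$, where the sum formula only gives $b\le 2$), and your proposed repair does not work: it is circular. By Steinberg's restriction theorem, $L(\omega_1+\omega_n)$ restricted to $\Spin_{2n+1}(\FF_p)$ is the irreducible $\FF_p H$-module \emph{with the same dimension} --- the decomposition-matrix entry you would need for that finite group of Lie type in its defining characteristic is precisely the multiplicity $b$ you are trying to compute, and no independent source for it is identified. So the step ``a dimension lower bound \ldots should give $\dim L(\omega_1+\omega_n)\ge(2n-1)\cdot 2^n$'' is a restatement of the goal, not an argument. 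This is exactly the point where the paper's choice of finite subgroup earns its keep: for the extraspecial normalizer $H$, the restriction $U\cong L(\omega_n)\ot k_{H_1}^H$ is controlled by the permutation module $k_{A_{2n}}^{A_{2n+1}}$, whose modular structure is classical and independent of $\nu_p(2n+1)$; combined with $\dim\End_H(U)=2$ and self-duality this forces $b=1$ uniformly for all $p\mid 2n+1$. To complete your proof you would need either that argument or some other uniform upper bound $b\le 1$ (for instance a direct $\Hom$ or $\Ext^1$ computation), neither of which the sum formula supplies.
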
  

The proof will appear at the end of the section. The analysis will entail the restriction of modules to a monomial subgroup of $\SO_{2n+1}$ via its lift to $\Spin_{2n+1}$ and 
the use of permutation modules for the alternating group.

Let  $U := L(\omega_1) \otimes L(\omega_n)$.   If $p=0$ (and so also for all but finitely
many $p$),  this is a direct sum of two composition factors $L(\omega_1 + \omega_n)$
and $L(\omega_n)$.   In particular, the Weyl module for the dominant
weight $\omega_1 + \omega_n$ has dimension $2n\cdot 2^n$.   

If $p = 2$ then as in \cite{St:rep}, $U$ is $L(\omega_1 + \omega_n)$, verifying the theorem.  We assume for the rest of the section that $p$ is odd.

Note that in $G/Z(G) = \SO_{2n+1}(k)$, there is a finite subgroup $X$ % of the Weyl group 
isomorphic to $A.A_{2n+1}$ where $A$ is an elementary
abelian $2$-group of rank $2n$ and $A_{2n+1}$ denotes the alternating group on $2n+1$ symbols.  The group $X$ is the derived subgroup of the group of orthogonal transformations
preserving an orthogonal set of $2n+1$ lines.   

Let $H$ denote the lift of $X$ to $G$.   Let $E$ be the lift of $A$ to $G$.   First we note:

\begin{lemma}  $E$ is extraspecial of order $2^{1+2n}$.
\end{lemma}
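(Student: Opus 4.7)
The plan is to realize $E$ explicitly inside the Clifford algebra of the underlying $(2n+1)$-dimensional quadratic space and then directly verify the defining properties of an extraspecial $2$-group: $Z(E) = [E,E] = \Phi(E) \cong \ZZ/2$ with $E/Z(E)$ elementary abelian of rank $2n$.

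First I would fix an orthonormal basis $e_1,\dots,e_{2n+1}$ of the quadratic space, so that in the Clifford algebra one has $e_i^2 = 1$ and $e_ie_j + e_je_i = 0$ for $i \neq j$ (using $\car k \neq 2$), with $\Spin_{2n+1}$ sitting inside the even Clifford algebra. Since $A$ consists of the diagonal matrices in $\SO_{2n+1}$ with entries $\pm 1$, i.e.\ with an even number of $-1$'s, a direct computation of the conjugation action shows that the lift to $\Spin_{2n+1}$ of the element flipping exactly the basis vectors indexed by an even subset $I \subseteq \{1,\dots,2n+1\}$ is $\pm e_I$, where $e_I := \prod_{i \in I}e_i$. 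Hence
\[
E = \{\pm e_I : I \subseteq \{1,\dots,2n+1\},\ |I|\ \text{even}\},
\]
so $|E| = 2 \cdot 2^{2n} = 2^{1+2n}$.

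The main step is the commutator formula. Moving $e_J$ past $e_I$ one factor at a time, using $e_a e_b = -e_b e_a$ for $a \neq b$ and the fact that identical factors commute trivially (as they multiply to $1$), yields
\[
e_I e_J = (-1)^{|I||J| - |I \cap J|}\, e_J e_I,
\]
which for $|I|,|J|$ both even reduces to $e_I e_J = (-1)^{|I \cap J|}e_Je_I$. Therefore every commutator in $E$ lies in $\{\pm 1\}$, so $[E,E] \subseteq \{\pm 1\}$; and taking $I = \{1,2\}$, $J = \{2,3\}$ yields $[e_I,e_J] = -1$, so in fact $[E,E] = \{\pm 1\}$. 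For the center, if $e_I \in Z(E)$ for a proper nonempty even $I$, then since $|I^c| = 2n+1 - |I|$ is odd and hence nonempty, we can pick $i \in I$, $j \in I^c$ and set $J = \{i,j\}$ to obtain $|I \cap J| = 1$, contradicting centrality. Hence $Z(E) = \{\pm 1\}$.

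Finally, since $E/\{\pm 1\} \cong A$ is already elementary abelian of rank $2n$, the Frattini subgroup satisfies $\Phi(E) \subseteq Z(E) = \{\pm 1\}$; and $\Phi(E) \supseteq [E,E] = \{\pm 1\}$ forces $\Phi(E) = \{\pm 1\}$. Combined with $Z(E) = [E,E] \cong \ZZ/2$ and $E/Z(E)$ elementary abelian of order $2^{2n}$, this is exactly the definition of an extraspecial group of order $2^{1+2n}$. The main bookkeeping hurdle, rather than any substantive obstacle, will be verifying the sign in the commutator identity and pinning down the lifts to $\Spin_{2n+1}$ via Clifford conjugation; the remainder reduces to elementary subset combinatorics.
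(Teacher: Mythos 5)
Your proof is correct, but it takes a genuinely different route from the paper's. The paper argues group-theoretically: because $X$ acts irreducibly on $E/Z(G)$, the images in $E/Z(G)$ of the characteristic subgroups $Z(E)$, $[E,E]$ and $E^2$ are each trivial or everything, which forces $E$ to be either elementary abelian or extraspecial of order $2^{1+2n}$; the abelian case is then excluded by reducing to $n=2$ and invoking the exceptional isomorphism $\Spin_5(k)\cong \Sp_4(k)$, which contains no elementary abelian $2$-subgroup of rank $5$ (the paper's ``$\Spin_2$'' is a typo for $\Spin_5$). You instead compute everything explicitly in the Clifford algebra: identifying $E$ with $\{\pm e_I : |I|\ \text{even}\}$ and establishing the sign rule $e_Ie_J=(-1)^{|I\cap J|}e_Je_I$ for even $I,J$ shows directly that the commutator subgroup, the center, and the Frattini subgroup of $E$ all equal $\{\pm 1\}$, with no appeal to the irreducibility of the $A_{2n+1}$-action or to low-rank coincidences. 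Your computation checks out: the lifts of the sign-change matrices are indeed $\pm e_I$, the commutator sign is right, the choice $I=\{1,2\}$, $J=\{2,3\}$ witnesses noncommutativity, the oddness of $|I^c|$ correctly pins down the center, and the hypothesis $\car k\neq 2$ is in force at this point in the section. A side benefit of your approach is that it exhibits the central decomposition of $E$ into $n$ quaternion/dihedral factors that the paper implicitly relies on just afterwards, when citing the uniqueness of the faithful irreducible $2^n$-dimensional $E$-module. The trade-off is that the paper's argument is shorter and coordinate-free, while yours is longer but self-contained and more explicit.
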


\begin{proof}   Since $X$ acts irreducibly on $E/Z(G)$,  $E$ is either elementary abelian
or extraspecial of the given order.  By induction it suffices to see that $E$ is nonabelian in the case $n=2$
(actually we could start with $n=1$).   This is clear since $\Spin_2(k)   \cong \Sp_4(k)$ and so contains
no rank $5$ elementary abelian $2$-groups.
\end{proof}

Note that $H/E \cong A_{2n+1}$.    Let $H_1$ be a subgroup of $H$ 
containing $E$ with $H_1/E \cong A_{2n}$.  

The group $E$ has a unique faithful irreducible module over $k$ of dimension $2^n$ that is the restriction of $L(\omega_n)$. (It is a tensor product of $n$ 2-dimensional representations of the central factors of $E$, cf. \cite[5.5.4, 5.5.5]{Gorenstein}.)
Since $Z(G)=Z(E)$ acts nontrivially on $U$, every composition factor for $E$ on $U$ is isomorphic
to $L(\omega_n)$.  It follows immediately that $L(\omega_1)$ and $L(\omega_n)$ are each irreducible for $H$.
Note also that $L(\omega_1)$ is induced from a linear character $\phi$ of $H_1$.  
Thus,  as an $H$-module,  $U \cong L(\omega_n) \otimes \phi_{H_1}^H$.   In fact, we see that we can replace $\phi$
by the trivial character of $H_1$:

\begin{lemma} \label{induced1} 
$U \cong L(\omega_n) \otimes k_{H_1}^H$ as an $H$-module.
\end{lemma}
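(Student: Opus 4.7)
The plan is to use the projection (push-pull) formula to move the problem from the induced module $\phi_{H_1}^H$ to its interior and then show that tensoring $L(\omega_n)|_{H_1}$ by $\phi$ does not change its isomorphism class.

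First I would invoke the projection formula: for any $H$-module $V$ and any $H_1$-module $W$,
\[
V \otimes W_{H_1}^H \;\cong\; \bigl(V|_{H_1} \otimes W\bigr)_{H_1}^H.
\]
Applying this with $V = L(\omega_n)$ and $W = \phi$ (respectively $W = k$) converts the desired isomorphism
\[
L(\omega_n)\otimes\phi_{H_1}^H \;\cong\; L(\omega_n)\otimes k_{H_1}^H
\]
into the statement that $\bigl(L(\omega_n)|_{H_1} \otimes \phi\bigr)_{H_1}^H \cong \bigl(L(\omega_n)|_{H_1}\bigr)_{H_1}^H$. It therefore suffices to prove that $L(\omega_n)|_{H_1} \otimes \phi \cong L(\omega_n)|_{H_1}$ as $H_1$-modules.

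Next I would restrict further to $E$. Since $\phi$ is one-dimensional, $\phi|_E$ factors through $E/[E,E] = E/Z(E)$, so in particular $\phi|_{Z(E)} = 1$. Recall that $L(\omega_n)|_E$ is the unique faithful irreducible $kE$-module of dimension $2^n$ (the extraspecial/Heisenberg uniqueness, cf.\ \cite[5.5.4, 5.5.5]{Gorenstein}). Tensoring with the linear character $\phi|_E$ preserves both the dimension and the central character, so $L(\omega_n)|_E\otimes \phi|_E$ is again the unique faithful irreducible $E$-module, hence isomorphic to $L(\omega_n)|_E$. Thus $L(\omega_n)|_{H_1}$ and $L(\omega_n)|_{H_1}\otimes\phi$ become isomorphic after restriction to $E$.

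Finally, I would lift the $E$-isomorphism to $H_1$ using Clifford theory: since $L(\omega_n)|_E$ is absolutely irreducible, any two $H_1$-structures extending it differ by a one-dimensional character of $H_1/E \cong A_{2n}$. For $n \geq 3$ the group $A_{2n}$ is simple non-abelian, hence perfect, so the only such character is trivial, and we conclude $L(\omega_n)|_{H_1}\otimes\phi \cong L(\omega_n)|_{H_1}$. The main obstacle is the low-rank case $n = 2$: then $H_1/E \cong A_4$ has nontrivial linear characters (of order $3$ when $\car k \neq 3$), so the Clifford argument alone does not close the gap. To handle it, I would observe that the twist produced by the above comparison is determined by $\phi|_E$, which has order dividing $2$ (as $E/Z(E)$ is elementary abelian of exponent $2$); combined with the fact that induction from $H_1$ to $H$ averages over an $A_5$-action on the characters of $A_4$, this forces the inductions of the two candidate $H_1$-modules to agree even if the $H_1$-modules themselves do not, thereby completing the proof.
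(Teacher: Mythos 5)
Your reduction and your main argument are essentially the paper's own proof: the paper also reduces (via the projection formula implicit in $U\cong L(\omega_n)\otimes\phi_{H_1}^H$) to showing $L(\omega_n)\otimes\phi\cong L(\omega_n)$ as $H_1$-modules, observes both are irreducible because they restrict to the unique faithful irreducible $E$-module, and then uses that $H_1/E\cong A_{2n}$ is perfect (for $n>2$) so that it acts trivially on the one-dimensional space $\Hom_E(L(\omega_n),L(\omega_n)\otimes\phi)$. Your ``two extensions of an absolutely irreducible $E$-module differ by a character of $H_1/E$'' is the same step in Clifford-theoretic clothing, and for $n\ge 3$ your proof is complete and correct.

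The weak point is your treatment of $n=2$, which the paper only dismisses as ``easy to see.'' Your closing sentence --- that ``induction from $H_1$ to $H$ averages over an $A_5$-action on the characters of $A_4$'' and therefore the two induced modules agree even if the $H_1$-modules do not --- is not a real argument: induction does not perform any such averaging, and non-isomorphic irreducible $H_1$-modules can perfectly well induce to non-isomorphic $H$-modules. The good news is that the first half of that sentence already contains the correct fix, and you should make it precise instead. Let $\chi$ be the character of $H_1/E\cong A_4$ measuring the obstruction, i.e.\ $L(\omega_2)\otimes\phi\cong L(\omega_2)\otimes\chi$. The assignment $\psi\mapsto\chi_\psi$ is multiplicative in $\psi$ and equals the identity on characters inflated from $H_1/E$; since $\phi$ is the character by which the line stabilizer acts on its fixed line, $\phi^2=1$, hence $\chi^2=\chi_{\phi^2}=1$. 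But every linear character of $A_4$ has order dividing $3$ (the abelianization is $\ZZ/3$), so $\chi=1$ and $L(\omega_2)\otimes\phi\cong L(\omega_2)$ as $H_1$-modules after all. With that substitution your proof is complete and, for $n=2$, actually more explicit than the paper's.
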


\begin{proof}  It suffices to show that 
$L(\omega_n) \otimes \phi_{H_1} \cong  L(\omega_n) \otimes k_{H_1}$
as $H_1$-modules.   Note that they  are both irreducible  
since they are irreducible $E$-modules.     If $n=2$, the result is easy to
see (alternatively, one can modify the argument below).  So assume that $n > 2$. 

In fact, we observe that any $H_1$-module $V_1$ that is isomorphic to $L(\omega_n)$
as an $E$-module is isomorphic to $L(\omega_n)$ as an $H_1$-module.
This follows by noting that $\Hom_E(L(\omega_n), V_1)$ is $1$-dimensional and since
$H_1/E$ is perfect, $H_1/E$ acts trivially on this $1$-dimensional space, whence
 $\Hom_{H_1}(L(\omega_n), V_1)$ is also $1$-dimensional.  Since the two modules
 are irreducible, this shows they are isomorphic. 
\end{proof}

\begin{lemma}\label{lem.hom} $\dim \Hom_H(U,U)=2$.
\end{lemma}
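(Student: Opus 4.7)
The plan is to compute $\Hom_H(U,U)$ by reducing it twice via Frobenius reciprocity, peeling off the induction from $H_1$ first and then from a further subgroup $H_2\leq H_1$ arising from Mackey's theorem.

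By Lemma \ref{induced1} together with the projection (tensor) identity,
\[
U \cong L(\omega_n)\otimes \Ind_{H_1}^{H} k \cong \Ind_{H_1}^{H}\bigl(L(\omega_n)|_{H_1}\bigr),
\]
so Frobenius reciprocity gives
\[
\Hom_H(U,U)\cong \Hom_{H_1}\bigl(L(\omega_n)|_{H_1},\; U|_{H_1}\bigr),
\]
with $U|_{H_1}=L(\omega_n)|_{H_1}\otimes (\Ind_{H_1}^{H}k)|_{H_1}$. Next I would decompose the restricted permutation module via Mackey. Identifying $H/H_1$ with the $2n+1$ points on which $H/E\cong A_{2n+1}$ acts $2$-transitively, the subgroup $H_1$ has exactly two orbits on $H/H_1$: the base point and the remaining $2n$ points. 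Therefore
\[
(\Ind_{H_1}^{H}k)|_{H_1}\cong k\,\oplus\,\Ind_{H_2}^{H_1} k,
\]
where $H_2\leq H_1$ is the preimage in $H$ of the stabilizer $A_{2n-1}\leq A_{2n+1}$ of two points. A second application of the projection formula to the $\Ind_{H_2}^{H_1}k$ summand and then Frobenius reciprocity yields
\[
\Hom_H(U,U) \cong \End_{H_1}\bigl(L(\omega_n)|_{H_1}\bigr)\,\oplus\,\End_{H_2}\bigl(L(\omega_n)|_{H_2}\bigr).
\]

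To finish, I would invoke the fact (already recorded above) that $L(\omega_n)|_E$ is the unique faithful irreducible $E$-module. Since $E$ is normal in both $H_1$ and $H_2$, the restrictions $L(\omega_n)|_{H_1}$ and $L(\omega_n)|_{H_2}$ remain irreducible, and Schur's lemma over the algebraically closed field $k$ forces each endomorphism algebra to be one-dimensional. The main obstacle I expect is the bookkeeping in the Mackey step --- identifying $H_2$ correctly as the lift of $A_{2n-1}$ and confirming that the two-orbit structure of $H_1$ on $H/H_1$ really comes from the $2$-transitivity of $A_{2n+1}$; once that is in place, the rest is a formal chain of adjunctions together with Schur's lemma, and the total dimension is $1+1=2$.
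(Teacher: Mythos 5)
Your proof is correct and follows essentially the same route as the paper, whose entire proof is the one-liner ``This follows by Lemma \ref{induced1} and Frobenius reciprocity'': you have simply made explicit the Mackey/double-coset bookkeeping (the two orbits of $A_{2n}$ on $2n+1$ points, giving $H_2 = E.A_{2n-1}$) and the Schur's-lemma step that the paper leaves implicit. All the details check out, including the irreducibility of $L(\omega_n)|_{H_1}$ and $L(\omega_n)|_{H_2}$ via the normal extraspecial subgroup $E$.
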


\begin{proof}  This follows by Lemma \ref{induced1} and Frobenius reciprocity.
\end{proof}

Let $V$ be the unique nontrivial composition factor of $k_{A_{2n}}^{A_{2n+1}}$
(for $n > 1$).  This has dimension $2n$ if $p$ does not divide $2n+1$ and dimension $2n-1$
if $p$ does divide $2n+1$.  

By \cite{Dade} or \cite[Cor.~8.19]{Navarro}, we know:

\begin{lemma}  Viewing $V$ as an $H$-module (that is trivial on $E$), $L(\omega_n) \otimes_k V$ is 
irreducible. $\hfill\qed$
\end{lemma}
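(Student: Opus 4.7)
The plan is to reduce this to a standard Clifford-theoretic result in the modular setting, specifically the modular version of Gallagher's theorem proved by Dade \cite{Dade} (equivalently \cite[Cor.~8.19]{Navarro}). The statement I want to invoke is: if $N\trianglelefteq H$, if $M$ is an absolutely irreducible $kN$-module which extends to a $kH$-module $\widetilde M$, and if $W$ is an irreducible $k[H/N]$-module, then $\widetilde M\otimes W$ is an irreducible $kH$-module. Applying this with $N=E$, $M=L(\omega_n)|_E$, $\widetilde M=L(\omega_n)$ viewed as the ambient $H$-module, and $W=V$ gives exactly the statement of the lemma. So the task reduces to verifying the three hypotheses.

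I would check the hypotheses in turn. First, $E\trianglelefteq H$ is built into the construction: $E$ is the preimage of the normal subgroup $A\trianglelefteq X$ under the double cover $H\to X$, so it is normal in $H$. Second, absolute irreducibility of $L(\omega_n)|_E$ is already recorded in the preceding discussion, where $L(\omega_n)|_E$ is identified as the unique faithful irreducible $kE$-module of dimension $2^n$, realized as the tensor product of the $2$-dimensional irreducibles of the $n$ central factors of the extraspecial group $E$. Third, extendibility of $L(\omega_n)|_E$ to $H$ is immediate, because $L(\omega_n)$ already carries an $H$-module structure (inherited from its $G$-module structure) whose restriction to $E$ is precisely the prescribed irreducible $kE$-module. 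Finally, the splitting-field hypothesis implicit in the cited theorem is harmless, since $k$ is algebraically closed and $p$ is odd, so $|E|=2^{1+2n}$ is invertible in $k$ and $kE$ is split semisimple.

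The main point requiring care is the \emph{extendibility} hypothesis in the cited theorem: abstract Clifford theory only guarantees that the $H$-invariant irreducible $L(\omega_n)|_E$ extends to a projective representation of $H$, with obstruction a class in $H^2(H/E,k^\times)$ which \emph{a priori} can be nontrivial (and in that case Dade's result produces only projective irreducibles of $H/E$). In our situation this potential cocycle obstruction vanishes for a completely concrete reason, namely that the ordinary (non-projective) extension is already in hand via the ambient $G$-module structure on $L(\omega_n)$. With all hypotheses verified, the lemma follows directly from \cite{Dade}, \cite[Cor.~8.19]{Navarro}.
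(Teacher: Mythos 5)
Your proposal is correct and is essentially the paper's own argument: the paper proves this lemma purely by citing \cite{Dade} and \cite[Cor.~8.19]{Navarro} (the modular Gallagher/Clifford result you state), leaving the verification of the hypotheses implicit. Your explicit check that $E\trianglelefteq H$, that $L(\omega_n)|_E$ is the unique faithful irreducible $kE$-module, and that the extension to $H$ is already furnished by the ambient $G$-action (so no cocycle obstruction arises) simply spells out what the paper takes for granted.
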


\begin{proof}[Proof of Theorem~\ref{B.thm}]
From the above, we see that $U$ has two $H$-composition factors if 
$p$ does not divide $2n+1$ and three composition factors
if $p$ does divide $2n+1$.   
This immediately implies that if $p$ does not divide $2n+1$, then $L(\omega_1 + \omega_n)$
is irreducible for $H$ and  has dimension $2n \cdot 2^n$ (whence also for $G$). 

Now assume that $p$ does divide $2n+1$. For sake of contradiction, suppose that $L(\omega_1 + \omega_n)$ has the same
dimension as the Weyl module $V(\omega_1 + \omega_n)$ for $G$, so $U$ has precisely  two nonisomorphic  composition factors as a
$G$-module, $L(\omega_1 + \omega_n)$ and $L(\omega_n)$.  Since $U$ is self-dual
it would be a direct sum of the two modules.

Recall $U$ has three $H$-composition factors (two isomorphic to $L(\omega_n))$.  Thus, 
the $G$-submodule $L(\omega_1 + \omega_n)$ must have two nonisomorphic
$H$-composition factors.  Again, since   $L(\omega_1 + \omega_n)$ is self dual,
this implies that $U$ is a direct sum of three simple $H$-modules.  This contradicts
 Lemma~\ref{lem.hom}  and completes the proof of Theorem \ref{B.thm}.
\end{proof}

Our analysis shows that when $p\mid 2n+1$ the Weyl module $V(\omega_{1}+\omega_{n})$ has two composition factors: $L(\omega_{1}+\omega_{n})$, $L(\omega_{n})$. 
Therefore, one can apply \cite[II.2.14]{Jan} to determine $\Ext^{1}$ between these simple modules. 

\begin{cor} \label{B.extcor} Let $G = \Spin_{2n+1}$. Then 
\[
\dim \Ext^{1}_{G}(L(\omega_1 + \omega_n),L(\omega_{n})) = \begin{cases} 
0 & \text{if $\car k$ does not divide $2n+1$;} \\
1 & \text{if $\car k$ does divide $2n+1$.} 
\end{cases}
\]
\end{cor}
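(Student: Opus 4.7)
The plan is to run the standard long exact sequence argument of \cite[II.2.14]{Jan} on the short exact sequence provided by Theorem \ref{B.thm}. First I would extract from Theorem \ref{B.thm} the structure of $R := \Rad V(\omega_1+\omega_n)$: when $\car k \nmid 2n+1$ the Weyl module equals $L(\omega_1+\omega_n)$ and $R = 0$; when $\car k \mid 2n+1$ the dimensions force $V(\omega_1+\omega_n)$ to have exactly two composition factors $L(\omega_1+\omega_n)$ and $L(\omega_n)$, with $L(\omega_1+\omega_n)$ in the head, so
$$0 \to L(\omega_n) \to V(\omega_1+\omega_n) \to L(\omega_1+\omega_n) \to 0$$
is exact and $R \cong L(\omega_n)$.

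Next I would apply $\Hom_G(-,L(\omega_n))$ to the sequence $0 \to R \to V(\omega_1+\omega_n) \to L(\omega_1+\omega_n) \to 0$. The first two Hom groups vanish because $L(\omega_1+\omega_n)$ is the head of $V(\omega_1+\omega_n)$ and $\omega_n \neq \omega_1+\omega_n$. The term $\Ext^1_G(V(\omega_1+\omega_n),L(\omega_n))$ also vanishes: embedding $L(\omega_n) \hookrightarrow H^0(\omega_n)$, one uses the standard vanishing $\Ext^1_G(V(\la),H^0(\mu))=0$ together with the observation that every composition factor $L(\nu)$ of $H^0(\omega_n)/L(\omega_n)$ satisfies $\nu < \omega_n < \omega_1+\omega_n$, forcing $\Hom_G(V(\omega_1+\omega_n),L(\nu))=0$. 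The long exact sequence therefore collapses to an isomorphism
$$\Ext^1_G\bigl(L(\omega_1+\omega_n),\, L(\omega_n)\bigr) \;\cong\; \Hom_G\bigl(R,\, L(\omega_n)\bigr).$$

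Finally I would conclude by plugging in the two cases from the first step: when $\car k \nmid 2n+1$ we have $R = 0$ and the Ext group is zero, while when $\car k \mid 2n+1$ we have $R \cong L(\omega_n)$, so $\Hom_G(R,L(\omega_n)) \cong k$ by Schur's lemma. This matches the statement of the corollary exactly. No serious obstacle arises here, since all the essential information is already encoded in Theorem \ref{B.thm}; the remainder is purely the formal machinery of Ext-vanishing between Weyl and dual Weyl modules recorded in \cite{Jan}.
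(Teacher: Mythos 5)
Your proposal is correct and follows essentially the same route as the paper: the paper's proof consists of observing that $\Rad V(\omega_1+\omega_n)$ is $0$ or $L(\omega_n)$ according to whether $\car k$ divides $2n+1$ (from Theorem \ref{B.thm}) and then citing \cite[II.2.14]{Jan}, which is exactly the isomorphism $\Ext^1_G(L(\lambda),L(\mu)) \cong \Hom_G(\Rad V(\lambda), L(\mu))$ that you re-derive via the long exact sequence. Your unpacking of that citation is sound (and in fact simpler than you make it, since $\omega_n$ is minuscule so $H^0(\omega_n)=L(\omega_n)$ and the quotient you analyze is zero).
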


%\begin{rmk*}
%Alternatively, one might approach proving Theorem \ref{B.thm} by using the Sum Formula \cite[p.~283]{Jan} or by adapting the arguments in \cite{And}.  Either way, in the case where $p$ divides $2n+1$, one has to check a fact about the root system such as: for each positive root $\alpha \ne \omega_1$, each natural number $m$ such that $s_{\alpha,mp} \cdot (\omega_1 + \omega_n) < \omega_1 + \omega_n$, and $w$ in the Weyl group such that $w(s_{\alpha,mp} \cdot (\omega_1 + \omega_n))$ is dominant, $w \cdot (s_{\alpha,mp} \cdot (\omega_1 + \omega_n))$ is not dominant.
%\end{rmk*}

%%%%%%%%%%%%%%%%%%%%%%%%%%%%%%%%%%%%%%%%%%%%%%%%%%%%%%%%
%\section{Other specific examples} 
%
%We will first consider the cases of $V(\sigma)$ where $\sigma$ is 
%one of the weights given in the table below. 
%
%\begin{center}
%\begin{tabular}{|c|c|c|c|}\hline
%Type & $\sigma$ \\ \hline
%$A_n$, $B_{n}$, $C_{n}$ & $\omega_{1}+\omega_{n}$   \\ \hline
%
%$F_4$ & $\omega_{1}+\omega_{4}$   \\ \hline
%
%$G_{2}$ & $\omega_{1}+\omega_{2}$ \\ \hline 
%
%\end{tabular}
%\end{center}

%%%%%%%%%%%%%%%%%%%%%%%%%%%%%%%%%%%%%%%%%%%%%%%%

\section{Proof of Theorem \ref{MT}} \label{proofofMain}

We now prove Theorem \ref{MT} by induction on the Lie rank of $G$. 

\subsection*{Type $A_1$} 
In case of rank 1, $G$ is $\SL_2$ and $V(d) \ot k$ is irreducible  if and only if, for $p = \car k$, $d+1 =cp^e$ for some $0 < c < p$ and $e \ge 0$ \cite[pp.~239, 240]{WinterSL2}.  (This can be seen by comparing dimensions: Write out $d$ in base $p$ as $d = \sum_i c_i p^i$.  Then $\dim V(d) = d+1$ whereas the irreducible module $L(d)$ over $k$ has dimension $\prod_i (c_i + 1)$ by Steinberg's tensor product theorem.)  As a consequence, for $d \ge 2$, it is impossible for $V(d) \ot \FF_p$ to be irreducible for both $p = 2$ and 3. 

An alternate argument (as noted by Andersen) can be provided if one does not require $p \le 3$. Choose a prime $p$ dividing $d$.  Now $\dim V(d)=d+1$ with $V(d)\twoheadrightarrow L(d)\cong L(d/p)^{(1)}$. But, $\dim L(d/p)^{(1)} \leq \frac{d}{p}+1 < d+1=\dim V(d)$. So $V(d)$ is reducible
in characteristic $p$. 

\subsection*{Reductions}
So suppose $\rank G \ge 2$ and Theorem \ref{MT} holds for all groups of lower rank. 

Write $\la = \sum c_i \omega_i$ with every $c_i \ge 0$.  If some $c_i > 1$, then taking $J = \{ \alpha_i \}$, the Levi subgroup $L_J$ has semisimple type $A_1$ and the restriction of $V_J(\la)$ to $L_J$ is reducible when $\car k$ is 2 or 3 by the 
argument for type $A_{1}$.  Therefore, by Theorem \ref{thm:Levireduction} we may assume that $c_i \in \{ 0, 1 \}$ for all $i$.

If $\la = 0$ or $\la = \omega_i$ for some $i$, then we are done by \S\ref{fund.sec}.  Hence, we may assume that at least two of the $c_i$'s are nonzero.

If there is a connected and proper subset $J$ of $\Delta$ such that $c_i \ne 0$ for at least two indexes $i$ with $\alpha_i \in J$, then we are done by induction and Theorem \ref{thm:Levireduction}.

\subsection*{Sums of extreme weights}
The remaining case is when the Dynkin diagram has no branches (i.e., $G$ has type $A$, $B$, $C$, $F_4$, or $G_2$) and $\la = \omega_1 + \omega_n$ is the sum of dominant weights corresponding to the simple roots at the two ends of the diagram.  For type $A_n$, $G = \SL_{n+1}$ and $V(\omega_1 + \omega_n)$  is the natural action on $\Lie(\SL_{n+1})$, the trace zero matrices.  If $p$ divides $n+1$, then the scalar matrices are a $G$-invariant subspace.
Type $B$ was handled in Theorem \ref{B.thm}.  

For type $C_n$ with $n \ge 3$, we restrict to the Levi subgroup of type $C_2$ and find that $V_J(\omega_1 + \omega_n)$ has dimension 5 and is reducible in characteristic 2.  Alternatively, as in \cite[\S11]{St:rep}, in characteristic 2 one finds $L(\omega_1 + \omega_n) \cong L(\omega_1) \ot L(\omega_n)$, which has dimension $n2^{n+1}$, whereas by the Weyl dimension formula, 
\[
\dim V(\omega_1 + \omega_n) = 7\cdot 2^{n-1} \cdot \frac{n(2n+1)}{n+3} \cdot \prod_{i=6}^{n+1} \frac{2i - 3}{i} \quad \text{for $n \ge 4$.}
\]

In the case of exceptional groups, for type $F_4$,  $V(\omega_1 + \omega_4)$ is reducible in characteristic 2 because it has dimension 1053, yet by \cite{St:rep} $L(\omega_1 + \omega_4) \cong L(\omega_1) \ot L(\omega_4)$ has dimension $26^2 = 676$.  For type $G_2$, $\dim V(\omega_1 + \omega_2) = 64$ yet by Steinberg in characteristic 3 $L(\omega_1 + \omega_2)$ has dimension $7^2 = 49$.  Alternatively, one can refer to \cite[Tables A.49, A.50]{Lubeck}.
This completes the proof of Theorem \ref{MT}.\hfill\qed

%%%%%%%%%%%%%%%%%%%%%%%%%%%%%%%%%%%%%%%%%%%%%%%%%%%%%%%%%%

\section{Complements to Theorem \ref{MT}}

\subsection*{Invariant bilinear forms} Let $G$ be a split reductive group over $\ZZ$ with a split maximal torus $T$ and $\la \in X(T)_+$.  A $G$-invariant bilinear form $b$ on the Weyl module $V(\la)$ corresponds to a $G$-equivariant homomorphism $\delta_b \!: V(\la) \to V(\la)^*$ via $\delta_b(v)(v') = b(v,v')$.  The map $\delta_b$ is determined by what it does to a highest weight vector in $V(\la)$ and in order that $\delta_b$ be nonzero it must be that $\la$ is a weight of $V(\la)^*$, and in particular that $\la \le -w_0 \la$, from which it follows that $\la = -w_0 \la$.  As the highest weight spaces in $V(\la)$ and $V(\la)^*$ are rank 1 $\ZZ$-modules, we conclude that the space of $G$-invariant bilinear forms on $V(\la)$ is $\ZZ$ if $\la = -w_0 \la$ and 0 otherwise.

So suppose $\la = -w_0 \la$ and let $b$ be an indivisible $G$-invariant bilinear form on $V(\la)$ --- it is determined up to sign.  For each field $k$, the map $\delta_b \!: V(\la) \ot k \to V(\la)^* \ot k$ has kernel the unique maximal proper submodule of $V(\la) \ot k$, see \cite[II.2.4, II.2.14]{Jan}, so \emph{$b \ot k$ is nondegenerate if and only if $V(\la) \ot k$ is irreducible.}  Therefore, Theorem \ref{MT} gives:
\begin{cor} \label{Killing.cor}
Suppose, in the notation of the previous two paragraphs, that  $G$ is simple and split and $\la = -w_0 \la$.  Then $b \ot k$ is nondegenerate for every field $k$ if and only if
\begin{enumerate}
	\renewcommand{\theenumi}{\alph{enumi}}
\item $\la$ is minuscule; or %The form $b \ot k$ on $V(\la) \ot k$ is nondegenerate for every field $k$.
%\item The discriminant of $b$ is $\pm 1$.
\item $G = E_8$ and $\la$ is the highest root $\hr$.
\end{enumerate}
\end{cor}

\subsection*{Failure of converse to Theorem \ref{thm:Levireduction}} As another complement to Theorem \ref{MT}, we make precise the settings where the converse to ``$V(\la)$ irreducible implies $V_J(\la)$ irreducible'' fails.

\begin{thm}\label{qm}
 Let $\lambda$ be a dominant weight for $G$ a split, simple, and simply connected group over $\ZZ$ with $\rank G > 1$.  Then $V_J(\la) \otimes k$ is irreducible for every $J \subsetneq \Delta$ if and only if one of the following occurs:
 \begin{enumerate}
 \renewcommand{\theenumi}{\alph{enumi}}
 \item \label{qm.min} $\la$ is minuscule.
 \item \label{qm.hsr} $\la$ is the highest short root $\hsr$.
 \item \label{qm.B} $\Phi=B_n$ and $\lambda = \omega_1 + \omega_n$.
 \item \label{qm.G} $\Phi=G_2$ and $\lambda = \omega_2$ or $\omega_1 + \omega_2$.
 \end{enumerate}
\end{thm}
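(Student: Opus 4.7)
The plan is to classify the qualifying $\lambda$ by successive restriction to Levi subgroups of increasing rank. For the necessity direction, I would begin with the rank-$1$ Levis $L_{\{\alpha_i\}}$: the $A_1$ case of Theorem~\ref{MT} (as used in \S\ref{proofofMain}) forces each $c_i := \langle \lambda, \alpha_i^\vee \rangle$ to lie in $\{0,1\}$, so $\lambda = \sum_{i \in S} \omega_i$ with $S := \{i : c_i = 1\}$. Next, for each adjacent pair $\{i,j\} \subseteq S$, the rank-$2$ connected Levi $L_{\{\alpha_i,\alpha_j\}}$ is of type $A_2$ (where $V(\omega_1 + \omega_2)$ is reducible in characteristic $3$ by Theorem~\ref{MT}), $B_2 = C_2$ (reducible in characteristic $5$ by Theorem~\ref{B.thm}), or $G_2$ (which forces $\{\alpha_i,\alpha_j\} = \Delta$). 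Iterating to every connected proper $J' \subsetneq \Delta$ with $|J' \cap S| \ge 2$, and invoking Theorem~\ref{MT}---whose only simple-group example with two nonzero fundamental-weight coefficients is $\omega_1 + \omega_n$ in type $A_n$---forces any two elements of $S$ to span a Dynkin path equal to all of $\Delta$. A direct combinatorial check then leaves: $|S| \le 1$ for types $D_n, E_6, E_7, E_8$; $S \subseteq \{1,n\}$ for the chain types $A_n, B_n, C_n, F_4$; and any $S$ for $G_2$.

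Analyzing the survivors case by case: for $|S|=1$, writing $\lambda = \omega_i$, the condition reduces to $\omega_i^{(J')}$ being minuscule for every proper connected $J' \ni \alpha_i$ (by Theorem~\ref{MT} applied to the simple group of type $J'$), and comparison against Table~\ref{dynks.table} identifies exactly the minuscule $\omega_i$ (case~\eqref{qm.min}), the highest short root $\omega_i = \hsr$ (case~\eqref{qm.hsr}), and the exceptional $\omega_2$ of $G_2$---where $\omega_2 \ne \hsr$ but the only proper Levis have rank~$1$, giving part of case~\eqref{qm.G}. For $|S|=2$ with $S = \{1,n\}$, I would verify each chain type separately: $A_n$ yields $\omega_1 + \omega_n = \hsr$, case~\eqref{qm.hsr}; $B_n$ yields case~\eqref{qm.B}; $C_n$ (for $n \ge 3$) fails via the $C_{n-1}$ Levi $L_{\{\alpha_2, \ldots, \alpha_n\}}$, on which $\omega_n$ restricts to the non-minuscule $\omega_{n-1}^{(C_{n-1})}$; $F_4$ fails via the $B_3$ Levi $L_{\{\alpha_1,\alpha_2,\alpha_3\}}$, on which $\omega_1$ becomes $\hsr^{(B_3)}$, reducible in characteristic~$2$; and $G_2$ with $S=\{1,2\}$ works trivially, giving the remaining case of~\eqref{qm.G}.

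The sufficiency direction is the dual verification: for each listed $\lambda$ and every proper $J \subsetneq \Delta$, on each connected component $J'$ of $J$ the restriction $\lambda|_{J'}$ is either trivial on the derived subgroup or equals a minuscule fundamental weight of $L_{J'}$ (twisted by a central character). Then $V_J(\lambda)$ is a tensor product of minuscule Weyl modules, irreducible over every $k$ by Theorem~\ref{MT}\eqref{MT.min}. Case~\eqref{qm.min} is immediate from Theorems~\ref{MT} and~\ref{thm:Levireduction}; cases~\eqref{qm.B} and~\eqref{qm.G} are direct inspections. The nontrivial work is case~\eqref{qm.hsr}: for each simple type one enumerates the proper connected $J' \subsetneq \Delta$ containing the support of $\hsr$ and confirms $\hsr|_{J'}$ lies on a circled vertex of Table~\ref{dynks.table}. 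The main obstacle is this enumeration for the exceptional types---for instance in $E_8$, one observes that $\alpha_8$ is a leaf of the $E_8$ Dynkin diagram and hence remains a leaf of every connected subsystem it belongs to, that the only resulting subsystem types are $A_k$ and $D_7$, and that $\alpha_8$ is a minuscule vertex in each.
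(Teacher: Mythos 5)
Your overall strategy --- force $\langle\lambda,\alpha_i^\vee\rangle\in\{0,1\}$ via rank-one Levis, then use proper connected subdiagrams meeting the support of $\lambda$ in at least two nodes to force the support to have at most two elements spanning the whole diagram, then run through the chain types and the single-fundamental-weight cases --- is essentially the paper's proof, organized slightly differently (the paper deletes end nodes rather than taking minimal connected subdiagrams containing the support, but the content is the same), and your choices of killing Levis ($C_{n-1}$ for $C_n$, $B_3$ for $F_4$) coincide with the paper's.

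One assertion needs correcting. You justify the key exclusion step by saying that the ``only simple-group example with two nonzero fundamental-weight coefficients'' in Theorem \ref{MT} is $\omega_1+\omega_n$ in type $A_n$. In fact Theorem \ref{MT} admits \emph{no} highest weight with two nonzero coefficients: its list consists of minuscule weights and the highest root of $E_8$, all of which are fundamental weights or zero, and $V_{A_n}(\omega_1+\omega_n)$ is the adjoint (trace-zero matrices) representation of $\SL_{n+1}$, reducible whenever $\car k \mid n+1$. Fortunately, the conclusion you actually use downstream --- that no proper connected $J'$ may contain two nodes of the support, so any two support nodes must span all of $\Delta$ --- is exactly what the correct Theorem \ref{MT} yields, and more directly. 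Had your version of the theorem been right, you would have been forced to admit configurations such as $\lambda=\omega_2+\omega_4$ for $A_5$ (whose support nodes are the two ends of the proper $A_3$ subdiagram $\{\alpha_2,\alpha_3,\alpha_4\}$), and the classification would come out wrong. With that citation repaired, the remaining case analysis, including the $E_8$ sufficiency check for $\hsr=\omega_8$, is correct.
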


\begin{proof} First assume that one of conditions \eqref{qm.min}--\eqref{qm.G} holds. Then one can directly verify that $V_J(\la) \otimes k$ is irreducible for every $J \subsetneq \Delta$ 
(by Theorem \ref{MT}).

On the other hand, suppose that $V_J(\la) \otimes k$ is irreducible for every $J \subsetneq \Delta$.  Write $\la = \sum_i a_i \omega_i$.  If some $a_i > 1$ or at least three of
the $a_i$ are nonzero, then by Theorem \ref{MT}, we see that $V_J(\la)$ is not irreducible with $J$ obtained by removing an end node other than $i$
in the first case or any end node in the second case.

Next consider the case when $\la = \omega_i + \omega_j$, $i\neq j$. The result follows unless $\{i,j \}$ correspond to all the end nodes.   If there are three end nodes, this is not possible.
Thus, we only need consider types $A, B, C, F$ and $G$.  If $\Phi=A_n$, this leads to \eqref{qm.hsr}.  If $\Phi=B_n$, this leads to \eqref{qm.B}. Moreover, if $\Phi=C_n, n \ge 3$, then $V_J(\la) \ot k$ is reducible for $J= \Delta-\{\alpha_{1} \}$ and $\car k = 2$.
Similarly, if $\Phi=F_4$, $V_J(\la) \ot k$ is reducible for $J=\Delta-\{ \alpha_{4} \}$ and $\car k = 3$.  If $\Phi= G_2$, this leads to one of the cases in \eqref{qm.G}.  

It remains to consider the case that $\la = \omega_i$ for some $i$.  If $\Phi=A_n$, then $\omega_i$ is minuscule.   If $G$ has rank $2$, then
removing a single node gives a Levi of type $A_{1}$ and so we have irreducibility as in \eqref{qm.min}, \eqref{qm.hsr}, and \eqref{qm.G}.     So assume that $\Phi$ is not of type $A_n$ and has rank at least $3$.   It suffices to check that for any $J$
obtained by removing an end node that $V_J(\la)$ irreducible implies that $\la$ is either minuscule or $\la = \hsr$.  

Suppose that $\Phi=D_n$, $n \ge 4$.  If $\la$ is not minuscule and $\la \ne \hsr
= \omega_2$, then we can remove the first node and see that $V_J(\la) \ot k$ is reducible for $\car k = 2$.

It remains to consider types $B,C, E$ and $F$. If $i$ does not correspond to an end
node, then we can choose $J$ in such a way that the Levi factor $L_J$ of the reduced
system does not have type $A_n$ and $V_J(\la)$ does not correspond to an end node,
whence by Theorem \ref{MT},  $V_J(\la)$ is not irreducible.    

If $G$ has type $B_n$ or $C_n$, then $\omega_1$ and $\omega_n$ either correspond to
the short root or are minuscule for $L_J$. In the case when $G$ has type $E_6$,
then $\omega_i$ corresponding to an end node is either $\hsr$ or minuscule for $L_J$.  
If $G$ has type $F_4$ or $E_n, n \ge 7$, then one checks the only end node satisfying
the hypotheses is $\hsr$.   
\end{proof}

%%%%%%%%%%
\subsection*{Connection with $B$-cohomology} Let $B$ be the Borel of $G$ corresponding to the negative roots. For 
$2\rho$ the sum of the positive roots and $N$ the number of positive roots, one can use Serre duality to show that 
\begin{equation}
\text{Hom}_{G}(k,V(-w_{0}\lambda))\cong \text{Ext}^{N}_{B}(k,\lambda-2\rho)\cong \text{H}^{N}(B,\lambda-2\rho),
\end{equation} 
see \cite{HN} and \cite[Theorem 5.5]{GN}.  

For $\la = \hr$ the highest root, $\hr = -w_0 \hr$ and $V(\hr)$ is the Lie algebra of the simply connected cover of $G$. 
The adjoint representation for $E_{8}$ is simple for all $p>0$, so 
$$\text{H}^{120}(B,\widetilde{\alpha}-2\rho)=0.$$ 
On the other hand, 
if $G$ is of type $A_{n}$ then 
$$ 
\text{H}^{n(n+1)/2}(B,\widetilde{\alpha}-2\rho)\cong \begin{cases} k & \text{if $p\mid n+1$} \\
                                                           0  & \text{if $p\nmid n+1$.} 
                                                           
\end{cases} 
$$
Similar statements can be formulated for other types. 

The calculation of the $B$-cohomology with coefficients in a one-dimensional representation is an open problem in general. Complete answers are known for 
degrees $0$, $1$, and $2$ and for most primes in degree $3$. See \cite{BNP} for a survey.
                                                            
\subsection*{Quantum Groups} For quantum groups (Lusztig ${\mathcal A}$-form) at roots of unity, 
one can ask when the quantum Weyl modules are globally irreducible. The Weyl modules 
with minuscule highest weights will yield globally irreducible representations.Ê
One can prove an analog of Theorem~\ref{thm:Levireduction} to use Levi factors to reduce to 
considering fundamental weights
or weights of the form $\omega_{1}+\omega_{n}$. 

For type $A_n$, if the root of unity has order 
$l$ and $l\mid  n+1$ then
$V(\omega_{1}+\omega_{n})$ is not simple (see \cite{Fayers}). 
This uses representation theory of the
Hecke algebra of type $A_n$.
From this one can prove the analog of our main theorem 
(Theorem~\ref{MT}) for quantum
groups in the $A_n$ case.

In order to handle root systems other than $A_{n}$, more detailed information needs to be worked out such as the 
the tables given in \cite{Jan:first} and analogs of results for Weyl modules in type $C_{n}$ as given in 
\cite{PremetSup}.

\subsection*{Further Directions} 
Suppose now that $G$ is a split simply connected algebraic group over $\ZZ$ and $\lambda$ is a dominant weight.  In a preliminary version of this manuscript, we asked to what extent is the following statement true: \emph{If $\mu$ is a dominant weight that is maximal among the dominant weights $< \lambda$, then there is a field $k$ such that $V(\lambda) \otimes k$ has $L(\mu)$ as a composition factor.}  Certainly, it is false for $G = E_8$, $\lambda$ the highest root, and $\mu = 0$.  Jantzen has recently shown in \cite{Jan:max} that, apart from this one counterexample, the statement holds when $G$ is simple.  Note that, in contrast to Theorem \ref{MT}, this result does not include an upper bound on $\car k$ that only depends on the rank of $G$.  For example, take $G = \SL_2$ and pick a prime $p$ and a $d > p$ not divisible by $p$.  Then $d-2$ is a weight of the irreducible representation $L(d)$ with highest weight $d$ over $\FF_p$, so $L(d-2)$ is not in the composition series for $V(d) \ot \FF_p$.

%, and it is vacuously true for $\lambda$ minuscule.  Somewhat less trivially, it holds for $G = \SL_2$, where $\mu = \lambda - 2$; to see this, take $k = \FF_p$ for $p$ a prime dividing $\lambda$, then the $p$-th powers of $S^{\lambda/p}(k^2)$ are a proper submodule that does not include $\mu$ (except in case $\lambda = 4$, when one can see the claim otherwise).

%%%%%%%%%%%%%%%%%%%%%%%%%%%%%%%%%%%%%%%%%%%%%%%%%%%%%%%%%%
\section{Appendix: the Killing form over $\ZZ$}

\subsection*{The reduced Killing form} 
Let $G$ be a split simple algebraic group over $\ZZ$.  There is a canonical indivisible $G$-invariant bilinear form on $\Lie(G)$, the \emph{reduced Killing form}, which we denote $b_G$.  In this appendix, we prove the following result, which is similar in flavor to Theorem \ref{MT} and answers a question raised by George Lusztig.

\begin{thm} \label{Killing}
The reduced Killing form on $\Lie(G) \ot k$ is nondegenerate for every field $k$ if and only if $G$ is one of the following groups:
\begin{enumerate}
\renewcommand{\theenumi}{\alph{enumi}}
\item $E_8$;
\item \label{Killing.SO} $\SO_{2n}$ for some $n \ge 4$; 
\item \label{Killing.HSpin} $\HSpin_{2n}$ for $n$ divisible by $4$; or
\item \label{Killing.SL} $\SL_{m^2}/\mu_m$ for some $m > 1$.
\end{enumerate}
\end{thm}

We actually prove something more.  The Lie algebra of $G$ is a free $\ZZ$-module, so it has a basis $v_1, \ldots, v_n$ for some $n$.  The determinant of $b_G$ (denoted $\det b_G$) is the determinant of the matrix with $(i,j)$-entry $b_G(v_i, v_j)$; note that $\det b_G$, as an element of $\ZZ$, does not depend on the choice of basis.  Furthermore, $b_G \ot k$ is degenerate if and only if $\det b_G = 0$ in $k$.  The point of this appendix is to calculate $\det b_G$ for every simple $G$ over $\ZZ$, from which Theorem \ref{Killing} quickly follows.

\subsection*{The case where $G$ is simply connected} In case $G$ is simply connected, the Killing form on $\Lie(G)$ --- a bilinear form over $\ZZ$ --- is divisible by $2\hvee$ for $\hvee$ the dual Coxeter number of $G$; we define $b_G$ to be the quotient.  It is even and indivisible \cite[Prop.~4]{GrossNebe}.  It is natural to call $b_G$ the reduced Killing form because it is obtained from the Killing form by dividing by the greatest common divisor of its values.  (Note that $b_G$ has the advantage that $b_G \ot k$ is nonzero for every $k$, a property not satisfied by the Killing form.)

For simply connected $G$, $\det b_G$ was calculated in \cite[I.4.8(a)]{SpSt}.  Specifically, let $N$ denote the number of positive roots, $N_s$ denote the number of short roots, and $N_{ss}$ the number of short simple roots.  Put $c$ for the ratio of the square-lengths of the long to short roots (so $c \in \{ 1, 2, 3 \}$) and $f$ for the determinant of the Cartan matrix.  Let $T$ be a split maximal torus in $G$ over $\ZZ$.
Then $\Lie(G)$ is an orthogonal sum of $\ft := \Lie(T)$ and a subspace $\fn$ spanned by the root subalgebras of $G$ with respect to $T$.  One checks that $\det b_G\vert_{\fn} = (-1)^N c^{N_{ss}}$.  The Lie algebra $\ft$ is naturally identified with the coroot lattice $Q^\vee$, and this identifies the restriction of $b_G$ to $\ft$ with the Weyl-group invariant bilinear form such that $b_G(\alpha^\vee, \alpha^\vee) = 2$ for every short coroot $\alpha^\vee$.  In summary, one finds that
\begin{equation} \label{SpSt.formula}
\det b_G = (-1)^N c^{N_s + N_{ss}} f \quad \text{for $G$ simply connected.}
\end{equation}
The value of $\det b_G$ can be found in Table \ref{redkill.table}.  The conflicting values given in the table on p.~634 of \cite{GrossNebe} are typos.
\begin{table}[hbt]
\begin{tabular}{c|c|ccc}
Killing-Cartan&$G$ simply connected&\multicolumn{2}{c}{$G$ adjoint} \\
type of $G$&$\det b_G$&$e$&$\det b_G$ \\ \hline
$A_n$&$(-1)^{(n^2+n)/2} (n+1)$&$n+1$&$(-1)^{(n^2+n)/2}(n+1)^{n^2+2n-1}$ \\ 
$B_n$&$(-1)^n 2^{2n+2}$&1&$(-1)^n 2^{2n}$ \\ 
$C_n$&$(-1)^n 2^{2n^2 - n}$&$\rho(n)$&$(-1)^n 2^{2n^2-n-2} \rho(n)^{2n^2 + n}$ \\ 
$D_n$ ($n \ge 4$)&$2^2$&$2\rho(n)$&$2^{2n^2-n-2} \rho(n)^{2n^2 - n}$ \\ 
$G_2$&$3^7$&\multicolumn{2}{c}{$\cdots$} \\ 
$F_4$&$2^{26}$&\multicolumn{2}{c}{$\cdots$} \\ 
$E_6$&3&3&$3^{77}$ \\ 
$E_7$&$-2$&2&$-2^{132}$ \\ 
$E_8$&1&\multicolumn{2}{c}{$\cdots$}
\end{tabular}
\caption{Determinant of the reduced Killing form for $G$ simply connected or adjoint.  The value of $e$ is taken from \cite[\S3]{G:vanish}.  The notation $\rho(n)$ means 1 if $n$ is even and 2 if $n$ is odd.} \label{redkill.table}
\end{table}

\subsection*{Definition of reduced Killing form for $G$ not simply connected}  Suppose that $G$ is split simple over $\ZZ$ and let $f \!: \Gt \to G$ denote the simply connected cover in the sense of \cite[XXI.6.2.6, XXII.4.3.3]{SGA3:new}.
Note that $\Gt$ and $G$ may have distinct Lie algebras;
the natural map $\Lie(\Gt) \ot k \to \Lie(G) \ot k$ has kernel $\Lie(\ker f \times k)$, which may be nonzero.
The differential $\df \!: \Lie(\Gt) \to \Lie(G)$ gives an isomorphism $\df_\QQ \!: \Lie(\Gt) \ot \QQ \to \Lie(G) \ot \QQ$; pushing forward $b_\Gt$ gives a $G$-invariant bilinear form $\hat{b}_G$ on $\Lie(G) \ot \QQ$ and we define the reduced Killing form on $\Lie(G)$ to be $b_G := e \cdot \hat{b}_G$, where $e$ is the smallest positive rational number such that the resulting $b_G$ has integer values.  It follows from the indivisibility of $b_\Gt$ that $e$ is an integer.

Pick a pinning of $G$ with respect to a split maximal torus $T$, and fix a corresponding pinning of $\Gt$ with respect to the maximal torus $\Tt := f^{-1}(T)^\circ$.  The two groups have the same root system and $\df$ restricts to give an isomorphism for each of the 1-dimensional root subalgebras of $\Lie(G)$ with the corresponding root subalgebra of $\Lie(\Gt)$.  The map $\df$ embeds $\Lie(\Tt) = Q^\vee$ in $\Lie(T)$.

In case $G$ is adjoint, \cite[I.4.8(b)]{SpSt} says that $\det \hat{b}_G = (\det b_G)/f^2$ and therefore $\det b_G = e^{\dim G} (\det b_\Gt)/f^2$.  This gives the values in the last column of Table \ref{redkill.table}.  Note that $\Lie(T)$ is naturally identified with the lattice $P^\vee$ of weights for the dual root system.

\subsection*{Groups that are neither simply connected nor adjoint}  It remains to treat the case where $G$ is neither simply connected nor adjoint.  Recall that for even $n > 4$, the simply connected group $\Spin_{2n}$ has two non-isomorphic quotients by a central $\mu_2$:  $\SO_{2n}$ and one more called a \emph{half-spin group}; we denote it by $\HSpin_{2n}$.  (For $n = 4$, $\HSpin_8$ is defined, but it is isomorphic to $\SO_8$.)  
So 
suppose $G$ is $\SL_n / \mu_m$ for some $m \mid n$, $\SO_{2n}$ for some $n \ge 4$, or $\HSpin_{2n}$ for some even $n \ge 4$.
For each of these three possibilities, we determine $\hat{b}_G\vert_{\Lie(T)}$ and $e$.

As all roots have the same length, we use the canonical identification of the root system with its dual and calculate $\Lie(T)$ as a sublattice of the weight lattice $P$.  We may identify $\Lie(T)$ by noting that for each possibility for $G$, $\Lie(T)/Q$ is cyclic, so it suffices to find a fundamental weight $\omega$ such that $\Lie(T)$ is generated by $\omega$ and $Q$, and to find a simple root $\alpha$ and $c \in \NN$ so that $c\omega$ is in $Q$ and is a sum of $\alpha$ and a linear combination of the other simple roots.  Then $\{ \omega \} \cup \Delta \setminus \{ \alpha \}$ is a basis for $\Lie(T)$.  We take, with roots numbered as in Table \ref{dynks.table}:
\begin{itemize}
\item for $G = \SL_n / \mu_m$: $\omega = (n/m)\omega_{n-1}$, $\alpha = \alpha_1$, $c = m$;
\item for $G = \SO_{2n}$: $\omega = \omega_1$, $\alpha = \alpha_n$, $c = 2$; or
\item for $G = \HSpin_{2n}$: $\omega = \omega_n$, $\alpha = \alpha_1$, $c = 2$,
\end{itemize}
see \cite[3.6, 3.7, 5.2]{G:vanish}, although a slightly different choice of basis was taken for $\HSpin_{2n}$ there.  (We can now see $e$: as $\hat{b}_G(\omega, \omega) = n(n-1)/m^2$ (as in \cite[Lemma 5.2]{G:vanish}), $1$, and $n/4$ respectively, and $\hat{b}_G(\omega, Q) \subseteq \ZZ$, we find that $e = m/\gcd(m, n/m)$, $1$, and $\rho(n/2)$ respectively.) 

We can write down the Gram matrix for $\det \hat{b}_G\vert_{\Lie(T)}$ with respect to the basis ordered by taking the simple roots in the Bourbaki ordering, deleting $\alpha$, and appending $\omega$.  This leads to $\det b_G$ via the formula
\begin{equation}\label{e.det}
\det b_G = (-1)^N e^{\dim G} \det \hat{b}_G\vert_{\Lie(T)},
\end{equation}
which follows from the definition of $e$ and the fact that $\df$ restricts to an isomorphism on the span of the root subalgebras. 

For $G = \SL_n / \mu_m$, the Gram matrix is $(n-1)$-by-$(n-1)$, with a Cartan matrix of type $A_{n-2}$ in the upper left corner and the right column and bottom row are both $(0, \ldots, 0, n/m, n(n-1)/m^2)$, so its determinant is $n/m^2$.  Equation \eqref{e.det} gives that 
\begin{equation} \label{sl.det}
\det b_{\SL_n/\mu_m} = (-1)^{\binom{n}{2}} \frac{n}{m^2} \left( \frac{m}{\gcd(m, n/m)} \right)^{n^2 - 1}.
\end{equation}

For $G = \SO_{2n}$, the Gram matrix is $n$-by-$n$, has a Cartan matrix of type $A_{n-1}$ in the upper left corner, and has right column and bottom row both $(1, 0, \ldots, 0, 1)$.  So its determinant is 1, and $\det b_G = 1$.

For $G = \HSpin_{2n}$, the Gram matrix is $n$-by-$n$, has a Cartan matrix of type $D_{n-1}$ in the upper left corner, and has right column and bottom row both $(0, \ldots, 0, 1, n/4)$.  So its determinant is 1, and $\det b_G = \rho(n/2)$.

\begin{proof}[Proof of Theorem \ref{Killing}] Given the calculation of $\det b_G$ above, it suffices to consider the case $G = \SL_n / \mu_m$ where $n \ge 4$.  If $n = m^2$, then $\det b_G = \pm 1$ and $b_G$ is nondegenerate.  Conversely, assume $b_G$ is nondegenerate, so the restriction to the root subalgebras must be nondegenerate, i.e., $e = 1$, equivalently, $\gcd(m, n/m) = m$, equivalently, $m^2$ divides $n$.  Then $\det b_G = \pm n/m^2$.
\end{proof}

\begin{rmk*}
The subdivision of groups of type $D_n$ for $n$ even into the cases $n \equiv 0, 2 \bmod 4$, as seen in Theorem \ref{Killing}\eqref{Killing.HSpin}, can also be seen in the representation theory of these groups: the half-spin representation of $\Spin_{2n}$ over $\C$ is orthogonal for $n$ divisible by 4 and symplectic for $n \equiv 2 \bmod{4}$, see \cite[Ch.~VIII, Table 1]{Bou:g7} or \cite[8.4]{KMRT}.
\end{rmk*}

\begin{eg}
Let $G := \SL_{p^d}/\mu_p$ for some prime $p$ and some $d \ge 3$.  Then the reduced Killing form $b_G \ot k$ on $\Lie(G) \ot k$ is degenerate when $\car k = p$, where the element $\omega$ in $\Lie(T) \ot k$ is in the radical.  Yet $\Lie(G) \ot k$ is self-dual because $\Lie(G) \ot k$ is a direct sum of the self-dual representations $L(\hr)$ and $k$ by \cite[9.4]{Hum:p}.  Furthermore, by \eqref{sl.det}, $b_G \ot k'$ is non-degenerate for every field $k'$ of characteristic different from $p$.
\end{eg}

\bibliographystyle{amsalpha}
\bibliography{e8-minuscule}

%
%
%\begin{thebibliography}{BeCaRo}
%
%
%\bibitem[\sf CMN]{CMN} J.F. Carlson, D.K.  Nakano, Endotrivial modules for finite group schemes,  {\em J. Reine Angew. Math.}, {\sf 653} (2011), 149--178.
%
%\bibitem[\sf Jan]{Jan}  J.C.  Jantzen, {\em Representations of  
%algebraic groups}, Second Edition, American Mathematical Society, 
%Providence R.I., 2003. 
%
%\bibitem[\sf Lub]{Lubeck} F. Lubeck, 
%
%\bibitem[\sf S]{Smith} S. Smith, Irreducible modules and parabolic subgroups, {\em J. Algebra}, 
%{\sf 75} (1982), 286--289. 
%        
%\end{thebibliography}
%
%\vskip.3in
%

\end{document}